\documentclass[11pt,reqno]{amsart}
\usepackage{amsmath,amsfonts,amssymb,amscd,amsthm,amsbsy,epsf}
\textwidth=6.25truein
\textheight=8.75truein
\hoffset=-.5truein
\voffset=-.5truein
\footskip=36pt

\newtheorem{thm}{Theorem}
\newtheorem*{pitt}{Pitt's Inequality}
\newtheorem*{HLS}{Hardy-Littlewood-Sobolev Inequality}
\newtheorem*{cor}{Corollary}
\newtheorem{lem}{Lemma}

\newtheorem*{ConvoLem}{Convolution Lemma}
\newtheorem*{SymmLem}{Symmetrization Lemma}
\newtheorem*{ReductLem}{Reduction Lemma}
\newtheorem*{SWLem}{Stein-Weiss Lemma}
\newtheorem*{two-pointLem}{Two-Point Symmetrization Lemma}
\theoremstyle{definition}
\newtheorem*{rem}{Remark}
\def\ep{\varepsilon}
\def\H{{\mathcal H}}
\def\S{{\mathcal S}}
\def\complex{{\mathbb C}}
\def\HH{{\mathbb H}}
\def\real{{\mathbb R}}
\def\chy{c_{\text{h-y}}}
\def\Im{\mathop{\text{Im}}\nolimits}
\def\Re{\mathop{\text{Re}}\nolimits}
\begin{document}
\title[Embedding estimates and fractional smoothness]
{Embedding estimates and fractional smoothness}
\author{William Beckner}
\address{Department of Mathematics, The University of Texas at Austin,
1 University Station C1200, Austin TX 78712-0257 USA}
\email{beckner@math.utexas.edu}
\dedicatory{``we shall begin by studying the fractional powers of the Laplacian'' --- Eli Stein}
\begin{abstract}
A short intrinsic proof is given for the Bourgain-Brezis-Mironescu theorem
with an extension for higher-order gradient forms. 
This argument illustrates the role of functional geometry and Fourier analysis 
for obtaining embedding estimates.
New Hausdorff-Young inequalities are obtained for fractional embedding as an extension 
of the classical Aronszajn-Smith formula. 
These results include bilinear fractional embedding as suggested by the Landau collision 
operator in plasma dynamics.
\end{abstract}
\maketitle


Functional forms that characterize smoothness lie at the heart of understanding 
and rigorously describing the many-body interactions that determine the behavior 
of dynamical phenomena. 
Coupled with the establishment of sharp embedding estimates, our overall 
objective is to expand the working framework for $n$-dimensional Fourier analysis
while gaining new insight into uncertainty, 
restriction phenomena and the role of geometric symmetry. 
Efforts to obtain optimal constants bring out new features of exact model 
problems, encoded geometric information and precise lower-order effects.

\section{Bourgain-Brezis-Mironescu theorem}

Let $\Lambda_\alpha = (-\Delta/4\pi^2)^{\alpha/2}$, $\alpha >0$, $1\le p< n/\beta$ and 
$0<\beta <1$; the main purpose here is to establish embedding estimates 
for the Besov norms:
\begin{gather*}
\int_{\real^n\times\real^n} \frac{|f(x) - f(y)|^p}{|x-y|^{n+p\beta}}\,dx\,dy 
\quad \rightsquigarrow\quad
\int_{\real^n\times\real^n} \frac{|(\nabla f)(x) - (\nabla f)(y)|^p}
{|x-y|^{n+p\beta}}\ dx\,dy\\
\noalign{\vskip6pt}
\rightsquigarrow\quad
\int_{\real^n\times\real^n} \frac{|(\Lambda_\alpha f)(x) - (\Lambda_\alpha f)(y)|^p}
{|x-y|^{n+p\beta}}\ dx\,dy
\end{gather*}
The intrinsic character of these norms captures the interplay between dilation 
and translation on $\real^n$. 
Classically such norms measure how differentiability controls function  size and 
determines restriction behavior.

The Bourgain-Brezis-Mironescu theorem corresponds to the estimate for 
$0<\beta <1$ and $1\le p<n/\beta$ (\cite{BBM-00}, \cite{BBM-02}, \cite{MS} and 
p.~521 in \cite{Mazya85}): 
\begin{equation*}
\int_{\real^n\times\real^n}  \frac{|f(x) - f(y)|^p}{|x-y|^{n+p\beta}}\ dx\,dy 
\ge c\bigg( \int_{\real^n} |f|^q\,dx\bigg)^{p/q}\ ,\qquad 
q = \frac{pn}{n-p\beta}
\end{equation*}
For $p=2$, the  best constant $c$ is calculated by the author in \cite{Beckner-Forum}: 
\begin{equation*}
\frac{n-2\beta}{\beta (1-\beta)}\ \pi^{\beta +\frac{n}2} \ 
\frac{\Gamma (2-\beta)}{\Gamma (\frac{n}2 +1-\beta)}\ 
\left[ \frac{\Gamma (\frac{n}2)}{\Gamma (n)}\right]^{2\beta/n}
\end{equation*}
This result extends to include fractional powers of the Laplacian:

\begin{thm}\label{frac-powers1}
For $f \in \S (\real^n)$, $0<\beta <1$ and $1\le p< n/(\alpha+\beta)$ 
\begin{equation}\label{eq-thm1}
\int_{\real^n\times\real^n} 
\frac{|(\Lambda_\alpha f) (x) - (\Lambda_\alpha f)(y)|^p}{|x-y|^{n+p\beta}}\ dx\,dy 
\ge c\bigg( \int_{\real^n} |f|^{q^*} \,dx\bigg)^{p/q^*}\ ,\qquad 
q^* = \frac{pn}{n-p(\alpha+\beta)}
\end{equation}
\end{thm}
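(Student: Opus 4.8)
The plan is to reduce the estimate directly to the Bourgain-Brezis-Mironescu inequality already recorded above, composed with the mapping properties of the Riesz potential. Writing $g = \Lambda_\alpha f$, the left-hand side of \eqref{eq-thm1} is exactly the Gagliardo-Besov seminorm of $g$ at smoothness level $\beta$. Since the hypothesis $p < n/(\alpha+\beta)$ forces $p < n/\beta$, the BBM inequality applies verbatim to $g$ and yields
\begin{equation*}
\int_{\real^n\times\real^n} \frac{|(\Lambda_\alpha f)(x) - (\Lambda_\alpha f)(y)|^p}{|x-y|^{n+p\beta}}\ dx\,dy
\ge c\bigg(\int_{\real^n} |\Lambda_\alpha f|^q\,dx\bigg)^{p/q}\ ,\qquad q = \frac{pn}{n-p\beta}\ .
\end{equation*}

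It then remains to bound $\|\Lambda_\alpha f\|_q$ from below by $\|f\|_{q^*}$. For this I would invert, writing $f = \Lambda_{-\alpha} g$, so that $f$ is the Riesz potential $I_\alpha g$ of order $\alpha$. The Hardy-Littlewood-Sobolev inequality then gives $\|I_\alpha g\|_{q^*} \le C\|g\|_q$ with $1/q^* = 1/q - \alpha/n$, and a short index computation
\begin{equation*}
\frac{1}{q^*} = \frac{n-p\beta}{pn} - \frac{\alpha}{n} = \frac{n-p(\alpha+\beta)}{pn}
\end{equation*}
recovers precisely the exponent asserted in the statement. Substituting $\|\Lambda_\alpha f\|_q \ge C^{-1}\|f\|_{q^*}$ into the displayed lower bound produces \eqref{eq-thm1} with constant $cC^{-p}$.

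The step requiring care is verifying that the exponent pair $(q,q^*)$ lies in the admissible range for the Riesz potential, where the Hardy-Littlewood-Sobolev inequality demands $1 < q$ and $\alpha q < n$. The first holds automatically: since $p\ge 1$ and $\beta>0$, one has $q = pn/(n-p\beta) > 1$, so even at the endpoint $p=1$ the intermediate exponent is strictly above $1$ and the strong-type inequality is available. The second, $q < n/\alpha$, rearranges to $p(\alpha+\beta) < n$, which is exactly the standing hypothesis $p < n/(\alpha+\beta)$; thus the upper restriction on $p$ is precisely what the embedding requires, and the two endpoints match. I would also confirm that $\Lambda_\alpha f$ is a legitimate function on which both the BBM seminorm and the inversion $\Lambda_{-\alpha}$ act as expected for $f\in\S(\real^n)$ --- this is where the only genuine technical (rather than conceptual) obstacle lies, since $\Lambda_\alpha f$ need not itself be Schwartz for non-integer $\alpha$, but its smoothness and decay away from the origin suffice to justify each step.
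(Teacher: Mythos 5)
Your proposal is correct and follows essentially the same route as the paper: the paper's Steps 1--3 (symmetrization, Lemma~1, and the pointwise bound $g^*(x)\le c|x|^{-n/q}$) are precisely an in-house proof of the Bourgain-Brezis-Mironescu inequality applied to $g=\Lambda_\alpha f$, and its Step~4 is the same Hardy-Littlewood-Sobolev application with the same index computation $1/q^* = 1/q - \alpha/n$ that you perform. The only difference is that you invoke the BBM estimate as a black box where the paper rederives it, and you are right to flag that $\Lambda_\alpha f$ need not be Schwartz --- the paper sidesteps this by running the symmetrization argument directly on $g$.
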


\begin{thm}\label{frac-powers2}
For $p= 2< n/(\alpha+\beta)$, the value of $c$ in Theorem~\ref{frac-powers1} 
is given by 
\begin{equation}\label{eq-thm2}
c = \frac2{\beta (1-\beta)} \ \pi^{\beta - \alpha +\frac{n}2}\ 
\frac{\Gamma (2-\beta)}{\Gamma (\frac{n}2 +\beta)}\ 
\frac{\Gamma (\frac{n}2 +\alpha +\beta)}{\Gamma (\frac{n}2 -\alpha-\beta)}\ 
\left[ \frac{\Gamma (\frac{n}2)}{\Gamma (n)}\right]^{2(\alpha+\beta)/n} 
\end{equation}
\end{thm}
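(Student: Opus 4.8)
The plan is to exploit the Hilbert-space structure at $p=2$ to convert the left-hand side of the inequality in Theorem~\ref{frac-powers1} \emph{exactly} into a weighted $L^2$ norm of $\hat f$, and then to invoke the sharp $L^2$ fractional Sobolev (Stein--Weiss/Hardy--Littlewood--Sobolev) estimate. Writing $g=\Lambda_\alpha f$ and substituting $z=x-y$, translation invariance together with the Plancherel theorem gives the identity
\begin{equation*}
\int_{\real^n\times\real^n}\frac{|g(x)-g(y)|^2}{|x-y|^{n+2\beta}}\,dx\,dy
=\int_{\real^n}K(\xi)\,|\hat g(\xi)|^2\,d\xi,\qquad
K(\xi)=\int_{\real^n}\frac{|e^{2\pi i z\cdot\xi}-1|^2}{|z|^{n+2\beta}}\,dz .
\end{equation*}
This step is an equality, so it loses nothing: the sharp constant of the theorem will be produced entirely by the Sobolev step below.

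Since $|e^{2\pi i z\cdot\xi}-1|^2=2\bigl(1-\cos(2\pi z\cdot\xi)\bigr)$, the kernel $K$ is rotation invariant and homogeneous of degree $2\beta$, so $K(\xi)=\kappa_\beta\,|\xi|^{2\beta}$ with $\kappa_\beta$ a single direction-integral. I would evaluate $\kappa_\beta$ from the standard normalization of the fractional Laplacian---that is, from the fact that $\int_{\real^n}\bigl(1-\cos(2\pi z\cdot\xi)\bigr)|z|^{-n-2\beta}\,dz$ reproduces the Fourier symbol $(2\pi|\xi|)^{2\beta}$---which yields
\begin{equation*}
\kappa_\beta=\frac{2}{\beta(1-\beta)}\,\pi^{2\beta+\frac n2}\,
\frac{\Gamma(2-\beta)}{\Gamma(\frac n2+\beta)} .
\end{equation*}
Because $\Lambda_\alpha$ has Fourier symbol $|\xi|^\alpha$, we have $\hat g(\xi)=|\xi|^\alpha\hat f(\xi)$, and therefore
\begin{equation*}
\int_{\real^n\times\real^n}\frac{|(\Lambda_\alpha f)(x)-(\Lambda_\alpha f)(y)|^2}{|x-y|^{n+2\beta}}\,dx\,dy
=\kappa_\beta\int_{\real^n}|\xi|^{2(\alpha+\beta)}\,|\hat f(\xi)|^2\,d\xi .
\end{equation*}

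It remains to bound the homogeneous $\dot H^{\alpha+\beta}$ seminorm on the right. Setting $s=\alpha+\beta\in(0,n/2)$ and $q^*=2n/(n-2s)$, the sharp $L^2$ fractional Sobolev inequality (the diagonal Stein--Weiss/Hardy--Littlewood--Sobolev estimate, in the sharp form computed by the author) reads
\begin{equation*}
\int_{\real^n}|\xi|^{2s}|\hat f(\xi)|^2\,d\xi
\ge \pi^{-s}\,\frac{\Gamma(\frac n2+s)}{\Gamma(\frac n2-s)}
\left[\frac{\Gamma(\frac n2)}{\Gamma(n)}\right]^{2s/n}
\left(\int_{\real^n}|f|^{q^*}\,dx\right)^{2/q^*},
\end{equation*}
whose extremals are the known fractional Talenti profiles. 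Multiplying this bound by $\kappa_\beta$ and collecting the powers of $\pi$ via $\pi^{2\beta+\frac n2}\cdot\pi^{-(\alpha+\beta)}=\pi^{\beta-\alpha+\frac n2}$ produces exactly the constant in \eqref{eq-thm2}; since the reduction was an identity and the Sobolev step is sharp (attained by density within $\S(\real^n)$), the resulting $c$ is optimal.

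The one genuinely delicate point is the precise evaluation of $\kappa_\beta$: one must track the $2\pi$ factors forced by the convention $\Lambda_\alpha=(-\Delta/4\pi^2)^{\alpha/2}$ and then carry out the Gamma-function simplification, using $\Gamma(2-\beta)=(1-\beta)\Gamma(1-\beta)$ and $\Gamma(\frac n2+1-\beta)=(\frac n2-\beta)\Gamma(\frac n2-\beta)$, to recognize the product as \eqref{eq-thm2}. As a consistency check, setting $\alpha=0$ merges the factor $\Gamma(\frac n2+\beta)$ in the numerator and denominator and reproduces the known $p=2$ constant $\tfrac{n-2\beta}{\beta(1-\beta)}\,\pi^{\beta+\frac n2}\,\tfrac{\Gamma(2-\beta)}{\Gamma(\frac n2+1-\beta)}\bigl[\Gamma(\tfrac n2)/\Gamma(n)\bigr]^{2\beta/n}$ quoted above. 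Everything else is a routine combination of Plancherel's theorem with a known sharp inequality.
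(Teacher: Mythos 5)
Your proposal is correct and follows essentially the same route as the paper: your Plancherel computation of the kernel $K(\xi)=\kappa_\beta|\xi|^{2\beta}$ is exactly the Aronszajn--Smith identity the paper cites (your $\kappa_\beta$ equals the paper's $D_\beta=\frac{2}{\beta}\pi^{\frac n2+2\beta}\Gamma(1-\beta)/\Gamma(\frac n2+\beta)$ via $\Gamma(2-\beta)=(1-\beta)\Gamma(1-\beta)$), and the second step is the same dual sharp Hardy--Littlewood--Sobolev inequality of Lieb. The constants combine as you indicate, and your $\alpha=0$ consistency check is sound.
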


\begin{proof}[Proof of Theorem~\ref{frac-powers1}]
This result follows by application of four lemmas utilizing symmetrization, 
Stein-Weiss techniques and the general Hardy-Littlewood-Sobolev inequality. 
Set $g= \Lambda_\alpha f$.
\medskip

\noindent {\sc Step 1:} 
apply the Symmetrization Lemma below to obtain 
\begin{equation}\label{eq-step1}
\int_{\real^n\times\real^n} 
\frac{|g(x) - g(y)|^p}{|x-y|^{n+p\beta}}\ dx\,dy 
\ge \int_{\real^n\times\real^n} 
\frac{|g^* (x) - g^*(y)|^p}{|x-y|^{n+p\beta}}\ dx\,dy
\end{equation}
where $g^*$ is the equimeasurable radial decreasing rearrangement of $|g|$ on 
$\real^n$.
\medskip

\noindent {\sc Step 2:} 
use Lemma~1 below (see Theorem~4.1 in \cite{Beckner-Forum}) to obtain 
\begin{equation}\label{eq-step2}
\int_{\real^n\times\real^n} 
\frac{|g^*(x) - g^*(y)|^p}{|x-y|^{n+p\beta}}\ dx\,dy
\ge D_{p,\beta} \int_{\real^n} |x|^{-p\beta} |g^* (x)|^p\ dx
\end{equation}
with 
$$D_{p,\beta} = \int_{\real^n} \big| 1-|x|^{-\lambda}\big|^p\ |x-\eta |^{-n-p\beta}\ dx$$
for $\lambda = (n-p\beta)/p = n/q$ and $\eta \in S^{n-1}$.
\medskip

\noindent {\sc Step 3:} 
recall that $q = pn/(n-p\beta)$ and let $\sigma (S^n)$ denote the surface area of the 
$n$-dimensional unit sphere. 
Since $g^* (x)$ is non-negative and radial decreasing
$$g^* (x) \le c|x|^{-n/q}\ ,\qquad 
c = \big[ n/\sigma (S^{n-1})\big]^{1/q} \|g^*\|_{L^q(\real^n)}$$
and 
\begin{align}
\int_{\real^n} |x|^{-p\beta} \ |g^*(x)|^p\,dx 
& \ge \left[ \frac{\sigma(S^{n-1})}{n}\right]^{p\beta/n} 
\bigg[ \int_{\real^n}|g^* (x)|^q \,dx\bigg]^{p/q} \notag\\
\noalign{\vskip6pt}
& = \left[ \frac{\sigma (S^{n-1})}{n}\right]^{p\beta/n} 
\bigg[ \int_{\real^n} |\Lambda_\alpha f|^q\,dx \bigg]^{p/q} \label{eq-step3}
\end{align}
\medskip

\noindent {\sc Step 4:} 
now for $0<\alpha < (n-p\beta)/p$ 
$$\bigg[ \int_{\real^n} |\Lambda_\alpha f|^q\,dx\bigg]^{p/q}
\ge c \bigg[ \int_{\real^n}  |f|^{q^*} \,dx\bigg]^{p/q^*}$$
if the Hardy-Littlewood-Sobolev inequality holds for 
$$\Big\| \frac1{|x|^{n-\alpha}} * f\Big\|_{L^{q^*} (\real^n)} 
\le c\|f\|_{L^q (\real^n)}$$
with $c$ a generic constant and $1/q^* = 1/q\ -\ \alpha/n$ which holds for 
$q=np/(n-p\beta)$ and $q^* = np/(n-p(\alpha +\beta))$.

This completes the proof of Theorem 1.
\renewcommand{\qed}{}
\end{proof}

\begin{rem}    
One could look for an alternate proof using Lemma 1 but without first
applying symmetrization. 
The idea would be to use a Stein-Weiss argument 
(see Appendix to \cite{Beckner-PAMS08}) depending on Young's inequality for convolution. 
But the singularity in the convolution kernel is too strong to effectively 
use such a simple approach.
\end{rem}

\begin{SymmLem}
For $f,g$ measurable functions on $\real^n$ with $f^*,g^*$ denoting the equimeasurable 
radial decreasing rearrangement of $|f|, |g|$; $K(x)$ radial decreasing and $p\ge 1$
\begin{equation}\label{eq:symmlem1}
\int_{\real^n\times\real^n} \mkern-36mu 
K(x-y) |f(x) - g(y)|^p\, dx\,dy 
\ge \int_{\real^n\times\real^n} \mkern-36mu 
K(x-y) |f^* (x) -g^* (y)|^p\,dx\,dy
\end{equation}
More generally, for $\rho$ a radial monotone increasing function and $\varphi$ a convex
function on $\real$ satisfying
\begin{itemize}
\item[(1)] $\varphi (t) \ge 0$, $\varphi (0)=0$
\item[(2)] $\varphi$ convex and monotone increasing, $\varphi'' (t) \ge0$
\item[(3)] $t\varphi' (t)$ convex
\end{itemize}
\begin{equation}\label{eq:symmlem2}
\int_{\real^n\times\real^n} \mkern-36mu
K(x-y) \varphi \left[ \frac{|f(x) - g(y)|}{\rho(x-y)}\right]\,dx\,dy 
\ge \int_{\real^n\times\real^n} \mkern-36mu
K(x-y) \varphi \left[ \frac{|f^* (x)-g^*(y)|}{\rho(x-y)} \right]\,dx\,dy
\end{equation}
\end{SymmLem}

The development of this lemma was outlined in the author's paper 
\cite{Beckner-Sobolev}  and motivated by earlier ideas of 
Ahlfors \cite{Ahlfors} (see Lemma~2.2, pages 34--35) perhaps inspired 
by Hardy and Littlewood \cite{HL} but expanding on themes in 
Hardy, Littlewood and P\'olya \cite{HLP}; 
also see Baernstein-Taylor \cite{BT}. 
Details of the proof are given in the Appendix below.

Application of the symmetrization lemma is necessary for Step~3 in the proof of 
Theorem~\ref{frac-powers1}. 
But generally there is no difficulty in making a reduction to radial functions 
for the embedding forms described here. 

\begin{ReductLem}\label{lem:reduct}
Let ``translation'' be a generic representation for any transitive action on 
a manifold for which the volume form is invariant. 
Then for $K$ non-negative and integrable and $1\le p<\infty$
\begin{equation}\label{eq:reductlem}
\int_{M\times M}\mkern-18mu
K(u-v) |f(u)-g(v)|^p\,du\,dv 
\ge \int_M K(u)\,du\,  \big|\, \|f\|_{L^p(M)} - \|g\|_{L^p(M)}\big|^p 
\end{equation} 
\end{ReductLem}

\begin{proof}
\begin{gather*}
\int_{M\times M} \mkern-18mu 
K(u-v)|f(u)-g(v)|^p\,du\,dv
= \int_M K(u) \bigg[ \int_M |f(u+v) - g(v)|^p\,dv\bigg]\,du\\
\noalign{\vskip6pt}
\ge \int_M K(u)\,du \, \big|\,\|f\|_{L^p(M)} - \|g\|_{L^p(M)}\big|^p
\end{gather*}
\end{proof}

By using $\xi \cdot \eta = \xi\cdot (R_\eta \hat e\,) = (R_\eta^{-1}\xi) \cdot \hat e$
on $S^{n-1}$, one obtains from this reduction lemma 
\begin{equation}\label{eq:reductlem2}
\int_{\real^n\times\real^n}\mkern-36mu
K(x-y)|f(x) - g(y)|^p\,dx\,dy 
\ge \int_{\real^n\times\real^n} \mkern-36mu
K(x-y) |F(x) - G(y)|^p\,dx\,dy
\end{equation} 
where 
$$F(x) = \bigg[ \int_{S^{n-1}} |f(|x|\xi)|^p\, d\xi\bigg]^{1/p} \quad ,\quad
G(y) =\bigg[ \int_{S^{n-1}} |g(|y|\xi)|^p\,d  \xi\bigg]^{1/p}$$
with $d\xi$ denoting standard surface measure on the sphere. 

\begin{lem}\label{lem1}
Let $f\in \S(\real^n)$, $0<\beta <1$ and $1\le p< n/\beta$; then 
\begin{equation}\label{eq:lem1}
\int_{\real^n\times\real^n} 
\frac{|f(x) - f(y)|^p}{|x-y|^{n+p\beta}}\,dx\, dy
\ge D_{p,\beta} \int_{\real^n} |x|^{-p\beta} |f(x)|^p\,dx
\end{equation}
\begin{equation*}
D_{p,\beta} = \int_{\real^n} \big|1-|x|^{-\lambda}\big|^p\, |x-\eta|^{-n-p\beta}\,dx
\end{equation*}
for $\lambda = (n-p\beta)/p$ and $\eta \in S^{n-1}$.
\end{lem}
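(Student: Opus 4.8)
The plan is to reduce \eqref{eq:lem1} to a one–dimensional, dilation–covariant inequality and then read off the sharp constant from the elementary reverse triangle inequality in $L^p$ --- precisely the mechanism already used to prove the Reduction Lemma. The guiding observation is that $D_{p,\beta}$ is the value of the left side evaluated on the homogeneous profile $w(x)=|x|^{-\lambda}$ against a single unit point: since $w(\eta)=1$ for $\eta\in S^{n-1}$,
$$D_{p,\beta}=\int_{\real^n}\frac{\big|\,|x|^{-\lambda}-1\,\big|^p}{|x-\eta|^{n+p\beta}}\,dx =\int_{\real^n}\frac{|w(x)-w(\eta)|^p}{|x-\eta|^{n+p\beta}}\,dx ,$$
and $w$ is the scale–invariant extremal: a change of variables $x=|y|u$ shows $\int_{\real^n}|w(x)-w(y)|^p\,|x-y|^{-n-p\beta}\,dx=D_{p,\beta}\,|y|^{-p\beta}\,w(y)^p$, since $\lambda p+p\beta=n$. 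This identity tells us exactly which homogeneity to factor out.

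First I reduce to radial $f$. Applying the Reduction Lemma in the form \eqref{eq:reductlem2} with $g=f$ replaces $f$ by its spherical $L^p$–mean $F(x)=\big[\int_{S^{n-1}}|f(|x|\xi)|^p\,d\xi\big]^{1/p}$, which does not increase the left–hand side; since the weight $|x|^{-p\beta}$ is radial, $\int_{\real^n}|x|^{-p\beta}|f|^p\,dx=\int_{\real^n}|x|^{-p\beta}F^p\,dx$ is unchanged, so it suffices to treat radial $F(x)=\phi(|x|)$. Then I pass to logarithmic radial coordinates: writing the double integral in polar form and integrating over angles produces the homogeneous kernel $\kappa(t)=\int_{S^{n-1}}|t\xi-\eta|^{-n-p\beta}\,d\xi$, after which I substitute $r=e^a$, $\rho=e^b$, set $\psi(a)=\phi(e^a)$, and absorb the extremal homogeneity through $\psi(a)=e^{-\lambda a}g(a)$. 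Using $\lambda p+p\beta=n$, all powers of $e^{b}$ cancel, and with $c=a-b$ the left–hand side becomes $\sigma(S^{n-1})$ times
$$\int_{-\infty}^{\infty}\!\!\int_{-\infty}^{\infty} \big|e^{-\lambda c}g(b+c)-g(b)\big|^p\,m(c)\,db\,dc ,\qquad m(c)=e^{nc}\kappa(e^c),$$
while the weighted term becomes $\sigma(S^{n-1})\,\|g\|_{L^p(\real)}^p$; thus it suffices to bound this double integral below by $D_{p,\beta}\,\|g\|_{L^p(\real)}^p$.

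The engine is the reverse triangle inequality. For each fixed $c$ the inner integral equals $\|e^{-\lambda c}\tau_c g-g\|_{L^p(\real)}^p$ with $\tau_c g(b)=g(b+c)$, and because $\|\tau_c g\|_{L^p}=\|g\|_{L^p}$ and $e^{-\lambda c}>0$,
$$\int_{-\infty}^{\infty}\big|e^{-\lambda c}g(b+c)-g(b)\big|^p\,db \ \ge\ \big|e^{-\lambda c}-1\big|^p\,\|g\|_{L^p(\real)}^p .$$
Integrating in $c$ against $m(c)$ and substituting $t=e^c$ identifies the constant, $\int_{-\infty}^{\infty}|e^{-\lambda c}-1|^p\,m(c)\,dc=\int_{\real^n}\big|\,|x|^{-\lambda}-1\,\big|^p|x-\eta|^{-n-p\beta}\,dx=D_{p,\beta}$, which finishes the proof; the constant profile $g\equiv1$ (i.e. $\phi=w$) shows the bound is sharp though not attained in the function space.

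The main --- really the only conceptual --- obstacle is isolating the correct twist $\psi=e^{-\lambda a}g$ that turns the dilation–covariant problem into a genuinely translation–invariant one in $b$; it is the arithmetic identity $\lambda p+p\beta=n$ that forces all $b$–homogeneity to cancel and simultaneously pins the constant to $D_{p,\beta}$. The rest is routine: verifying $D_{p,\beta}<\infty$ for $0<\beta<1$ and $p<n/\beta$ (the integrand is $O(|x-\eta|^{\,p(1-\beta)-n})$ near $\eta$, $O(|x|^{-\lambda p})$ near the origin, and $O(|x|^{-n-p\beta})$ at infinity), checking that $\int_{\real^n}|x|^{-p\beta}|f|^p\,dx<\infty$ for $f\in\S(\real^n)$, and applying Tonelli in the nonnegative integrals to justify the interchange of the $b$– and $c$–integrations.
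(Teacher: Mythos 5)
Your proof is correct, and its engine is the same as the paper's: transfer the radial problem to the multiplicative group $\real_+$ (your logarithmic coordinates $r=e^a$ are exactly that), peel off the homogeneity $e^{-\lambda a}$ using $\lambda p+p\beta=n$, and apply the reverse triangle inequality in $L^p$ to identify $D_{p,\beta}$ --- this is precisely the role of the paper's ``Triangle Inequality'' lemma with $g\psi^{1/p}$ as the second function. Where you genuinely diverge is the first step: the paper applies the full Symmetrization Lemma to pass to the radial decreasing rearrangement $f^*$ and must then close the argument with the rearrangement inequality $\int|x|^{-p\beta}|f^*|^p\,dx\ge\int|x|^{-p\beta}|f|^p\,dx$, whereas you use only the weaker spherical $L^p$-averaging of the Reduction Lemma, which preserves the weighted norm exactly and so needs no rearrangement input at all. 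This is a real simplification for Lemma~1 in isolation (monotonicity of the radial profile is never used there), and it is in fact the variant the paper itself reserves for the more general Stein--Weiss Lemma, which it says is proved ``depending on radial reduction rather than symmetrization''; the trade-off is that the paper's symmetrization is needed anyway for Step~3 of Theorem~\ref{frac-powers1}, so the paper's route does double duty. Two small points to tidy: the identity $\int_{\real^n}|x|^{-p\beta}|F|^p\,dx=\int_{\real^n}|x|^{-p\beta}|f|^p\,dx$ and the inequality \eqref{eq:reductlem2} are consistent only if $d\xi$ in the definition of $F$ is \emph{normalized} surface measure (otherwise both acquire compensating factors of $\sigma(S^{n-1})$, so the conclusion is unaffected but the bookkeeping should be stated); and your sharpness remark via $g\equiv 1$ needs a truncation/limiting argument since the constant profile is not in $L^p(\real)$ --- the paper is equally brief on this point.
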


\begin{proof}
The first step is to apply the symmetrization lemma to obtain 
$$\int_{\real^n\times\real^n}  
\frac{|f(x) - f(y)|^p}{|x-y|^{n+p\beta}}\,dx\,dy 
\ge \int_{\real^n\times\real^n} 
\frac{|f^*(x) -f^* (y)|^p}{|x-y|^{n+p\beta}}\,dx\,dy\ .$$
Then set $t= |x|$, $s= |y|$, $h(t) = |x|^{n/p\, -\,\beta } f^* (x)$. 
Then inequality \eqref{eq:lem1} for $f^*$ will be equivalent to the  inequality  on the 
multiplicative group $\real_+$:
\begin{gather}
\int_{\real_+\times\real_+} 
|g(x/t) h(t) - g(t/s) h(s)|^p \psi (s/t)\, \frac{ds}s\, \frac{dt}t \notag\\
\noalign{\vskip6pt}
\ge D_{p,\beta} \int_{\real_+}|h(t)|^p\, \frac{dt}t \label{eq:lem1pf}
\end{gather}
where $g(t) = t^{(n-p\beta)/2p}$,
$$\psi (t) = \int_{S^{n-1}} \Big[ t+ \frac1t - 2\xi_1 \Big]^{-(n+p\beta)/2}\,d\xi$$
and $d\xi$ denotes standard surface measure on $S^{n-1}$. 
Note that $\psi$ is symmetric under inversion. 
Apply the ``triangle inequality lemma'' below using $g\psi^{1/p}$ as the second function 
in the lemma, and one finds that 
\begin{align*}
D_{p,\beta} & = \int_{\real_+} |t^{\lambda/2} - t^{-\lambda/2}|^p \psi (t)\, \frac{dt}t\\
\noalign{\vskip6pt}
& = \int_{\real^n} \big| 1-|x|^{-\lambda} \big|^p \, |x-\eta|^{-n-p\beta}\,dx
\end{align*}
where $\lambda = (n-p\beta)/p$ and $\eta \in S^{n-1}$. 
Since the determination of $D_{p,\beta}$ depends only on application of the 
``triangle inequality'', the constant must be optimal as observed by a suitable variation 
of functions in that inequality. 
To finish the proof of the Lemma~\ref{lem1}, observe the result that 
\begin{gather*}
\int_{\real^n\times\real^n} 
\frac{|f(x)-f(y)|^p}{|x-y|^{n+p\beta}}\,dx\,dy 
\ge \int_{\real^n\times\real^n} 
\frac{|f^*(x) - f^*(y)|^p}{|x-y|^{n+p\beta}}\,dx\,dy\\
\noalign{\vskip6pt}
\ge D_{p,\beta} \int_{\real^n} |x|^{-p\beta} |f^* (x)|^p\,dx 
\ge D_{p,\beta} \int_{\real^n} |x|^{-p\beta} |f(x)|^p\,dx\ .
\end{gather*}
\end{proof}

\begin{lem}[Triangle Inequality]
For $f,g,h\in L^p(\real^m)$, $1\le p <\infty$
\begin{equation}\label{eq:triangle ineq}
\begin{split}
&\int_{\real^m\times\real^m}  
 |g(y-x) f(x) - h(x-y) f(y)|^p\,dx\,dy\\
 \noalign{\vskip6pt}
&\qquad \ge \int_{\real^m} 
 \big|\,|g(y)| - |h(-y)|\,\big|^p\,dy 
\int_{\real^m}    |f(x)|^p\,dx
\end{split}
\end{equation}
\end{lem}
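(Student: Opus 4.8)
The plan is to reduce the double integral to a one-parameter family of one-dimensional $L^p$ estimates by a volume-preserving change of variables, and then to apply the ordinary (reverse) triangle inequality for the $L^p$ norm fiberwise.

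First I would introduce $u = y - x$ in the double integral over $\real^m\times\real^m$. The map $(x,y)\mapsto(x,u)$ is a measure-preserving linear shear on $\real^{2m}$, so $dx\,dy = dx\,du$ with no Jacobian factor; since the integrand is nonnegative, Tonelli's theorem permits the iterated form
$$
\int_{\real^m\times\real^m} |g(y-x)f(x) - h(x-y)f(y)|^p\,dx\,dy
= \int_{\real^m}\bigg[\int_{\real^m} |g(u)f(x) - h(-u)f(x+u)|^p\,dx\bigg]\,du .
$$

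Next, for each fixed $u$ the quantities $a = g(u)$ and $b = h(-u)$ are scalars constant in $x$, so the inner integral equals $\|\,a f - b\,f(\cdot + u)\,\|_{L^p(\real^m)}^p$. I would apply the reverse triangle inequality $\|F - G\|_p \ge \big|\,\|F\|_p - \|G\|_p\,\big|$, which is an immediate consequence of Minkowski's inequality, with $F = a f$ and $G = b\,f(\cdot + u)$. Homogeneity gives $\|a f\|_p = |g(u)|\,\|f\|_p$, and translation invariance of Lebesgue measure gives $\|b\,f(\cdot + u)\|_p = |h(-u)|\,\|f\|_p$; hence the inner integral is at least $\big|\,|g(u)| - |h(-u)|\,\big|^p\,\|f\|_{L^p}^p$.

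Finally I would integrate this fiberwise bound in $u$ and rename the dummy variable $u$ to $y$, producing exactly the stated right-hand side. The argument is essentially routine, and I do not expect a genuine obstacle: the only points requiring care are verifying that the substitution is volume-preserving (so that no extra factor intrudes) and invoking translation invariance to identify the norm of the translate $f(\cdot+u)$ with $\|f\|_p$. Because the integrand is nonnegative throughout, the use of Tonelli is unconditional and no integrability hypothesis beyond $f,g,h\in L^p$ is needed. The real content is simply the observation that the reverse triangle inequality cleanly separates the variable $x$ from the multiplier data $(g,h)$ carried by $u$.
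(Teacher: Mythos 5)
Your proposal is correct and follows essentially the same route as the paper: the change of variables $y\mapsto x+y$ (your shear $u=y-x$), followed by the reverse triangle inequality for the $L^p$ norm in $x$ with translation invariance identifying $\|f(\cdot+u)\|_p=\|f\|_p$. The paper additionally notes optimality via the family $\ep^{m/p}f(\ep x)$, but that is a remark beyond the stated inequality.
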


\begin{proof} 
By a change of variables in $y$ and using the triangle inequality for norms in the first variable on 
$\real^n$:
\begin{equation*}
\begin{split}
&\int_{\real^m\times\real^m} |g(y-x) f(x) - h(x-y) f(y)|^p\, dx\,dy\\
\noalign{\vskip6pt}
&\qquad = \int_{\real^m\times\real^m} |g(y) f(x) - h(-y) f(x+y)|^p\,dx\,dy\\
\noalign{\vskip6pt}
&\qquad = \int_{\real^m} \bigg\{ \bigg( \int_{\real^m} |g(y) f(x) - h(-y) f(x+y)|^p\,dx\bigg)^{1/p}
\bigg\}^p\,dy\\
\noalign{\vskip6pt}
&\qquad \ge \int_{\real^m} \Big\{ \Big| \, |g(y)|\, \|f\|_{L^p (\real^m)} 
- |h(-y)|\, \|f\|_{L^p(\real^m)} \Big| \, \Big\}^p\,dy\\
\noalign{\vskip6pt}
&\qquad = \int_{\real^m} \Big|\, |g(y)| - |h(-y)|\, \Big|^p\,dy 
\int_{\real^m} |f(x)|^p\,dx\ .
\end{split}
\end{equation*}
By considering $g,h\ge 0$ and the family $\ep^{m/p} f(\ep x)$, one observes that 
the inequality is optimal.
\end{proof}

A slightly more general form of Lemma~\ref{lem1} can be given that reflects the 
general Stein-Weiss structure for fractional integrals (see the appendix in \cite{Beckner-Sobolev})
and uses an analogous proof (though depending on radial reduction rather than 
symmetrization):

\begin{SWLem} 
Suppose $K$ is a non-negative symmetric kernel defined on 
$\real^n\times \real^n$, continuous on any domain that excludes the 
diagonal, homogeneous of degree $-n-\gamma$, $K(\delta u,\delta v) = 
\delta^{-n-\gamma} K (u,v)$, $0<\gamma <\min (n,p)$, and 
$K(Ru,Rv) = K(u,v)$ for any $R\in SO(n)$. 
Then for $f\in \S (\real^n)$ and $p\ge 1$, 
\begin{align}
&\int_{\real^n\times\real^n} |f(x) - f(y)|^p K(x,y)\,dx\,dy 
\ge D_{p,\gamma} \int_{\real^n} |x|^{-\gamma} |f(x)|^p\,dx
\label{eq:SW-extended}\\
\noalign{\vskip6pt}
&\hskip1truein 
D_{p,\gamma} = \int_{\real^n} |1-|x|^{-\lambda} |^p K(x,\eta)\,dx
\notag
\end{align}
for $\lambda = (n-\gamma)/p$ and $\eta\in S^{n-1}$.
This constant is optimal. 
But note that there is no assumption made that it is finite.
\end{SWLem}

The last lemma required for the proof of Theorem~\ref{frac-powers1} is the 
Hardy-Littlewood-Sobolev inequality. 

\begin{HLS}
For $f\in L^p(\real^n)$, $0<\alpha <n$, $1<p< n/\alpha$ and $1/q = 1/p - \alpha/n$, then 
\begin{equation}\label{eq:HLS1}
\Big\| \frac1{|x|^{n-\alpha}} * f\Big\|_{L^q (\real^n)} 
\le c\| f\|_{L^p(\real^n)}
\end{equation}
and equivalently 
\begin{equation}\label{eq:HLS2}
\|f\|_{L^q(\real^n)} \le c \|\Lambda_\alpha f\|_{L^p(\real^n)}
\end{equation}
where $c$ is a generic constant.
\end{HLS}

A standard proof of this inequality is given by Stein using interpolation 
(see Theorem~1 in chapter 5 in \cite{Stein70}), but  a more intrinsic 
proof using Young's inequality and radial symmetry follows by first applying 
symmetrization and then transferring the inequality to the multiplicative group $\real_+$.
Once simply checks that the function 
$$\psi (t) = t^{n(\frac1p - \frac12 - \frac{\alpha}{2n})} 
\int_{S^{n-1}} \left[ \left( t - \frac1t\right)^2 + 2(1-\xi_1)\right]^{-(n-\alpha)/2}\,d\xi\ ,\qquad t>0$$
is in $L^r(\real_+)$ for $r=n/(n-\alpha)$. 
(See also the discussion of the Stein-Weiss theorem in the appendix to 
\cite{Beckner-PAMS08}.)  
Observe that initial integration on the sphere is necessary for the application
of Young's inequality.

\begin{proof}[Proof of Theorem~\ref{frac-powers2}] 
Apply the classical formula of Aronszajn-Smith (see \cite{Stein70} and Appendix 
to  \cite{Beckner-Forum})
\begin{gather}
\int_{\real^n\times\real^n} 
\frac{|(\Lambda_\alpha f)(x) - (\Lambda_\alpha f)(y)|^2}{|x-y|^{n+2\beta}}\ dx\,dy 
= D_\beta \int_{\real^n} |\xi|^{2\alpha +2\beta} |\hat f(\xi)|^2\,d\xi \label{eq6}\\
\noalign{\vskip6pt}
D_\beta = \frac2{\beta} \ \pi^{\frac{n}2 + 2\beta}\ 
\frac{\Gamma (1-\beta)}{\Gamma (\frac{n}2 +\beta)}
\notag
\end{gather}
and use the dual form of the Hardy-Littlewood-Sobolev inequality due to Lieb:
$$\int_{\real^n} |\xi|^{2\alpha +2\beta} |\hat f(\xi)|^2\,d\xi 
\ge c_{\alpha+\beta} \Big[ \|f\|_{L^{q^*} (\real^n)} \Big]^2$$
with 
$$c_{\alpha+\beta} = \pi^{-(\alpha +\beta)} \ 
\frac{(\frac{n}2 + \alpha+\beta)}{\Gamma (\frac{n}2 - \alpha +\beta)}\ 
\left[ \frac{\Gamma (\frac{n}2)} {\Gamma (n)} \right]^{2(\alpha+\beta)/n}$$
Putting these two statements together gives Theorem~\ref{frac-powers2}.
\renewcommand{\qed}{}
\end{proof}

\begin{cor}
For $f\in \S(\real^n)$, $0<\beta <1$ and $1\le p< n/(1+\beta)$ 
\begin{equation}\label{eq:cor}
\int_{\real^n\times\real^n} 
\frac{|(\nabla f)(x) - (\nabla f)(y)|^p}{|x-y|^{n+p\beta}}\ dx\,dy 
\ge c\bigg( \int_{\real^n} |f|^{q^*} \,dx\bigg)^{p/q^*}\ ,\qquad 
q^* = \frac{pn}{n-p(1+\beta)} 
\end{equation}
\end{cor}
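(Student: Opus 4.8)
The plan is to run the argument of Theorem~\ref{frac-powers1} with $\alpha=1$, but to discard the vector structure of the gradient at the very first step so that the scalar symmetrization and Stein--Weiss machinery applies unchanged. First I would invoke the reverse triangle inequality for the Euclidean norm on $\real^n$: since $|(\nabla f)(x) - (\nabla f)(y)| \ge \big|\,|(\nabla f)(x)| - |(\nabla f)(y)|\,\big|$ pointwise, raising to the power $p\ge1$ and integrating against the positive kernel $|x-y|^{-n-p\beta}$ gives
\begin{equation*}
\int_{\real^n\times\real^n} \frac{|(\nabla f)(x) - (\nabla f)(y)|^p}{|x-y|^{n+p\beta}}\,dx\,dy
\ge \int_{\real^n\times\real^n} \frac{|h(x) - h(y)|^p}{|x-y|^{n+p\beta}}\,dx\,dy\ ,
\end{equation*}
where $h = |\nabla f|$ is now a non-negative scalar function. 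Although $h$ is not Schwartz (the modulus fails to be smooth at the zeros of $\nabla f$), it is Lipschitz with rapid decay, which is ample regularity for the integral inequalities that follow.

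From here the proof parallels Steps~1--3 of Theorem~\ref{frac-powers1}, with $h$ playing the role of $g=\Lambda_\alpha f$. Applying the Symmetrization Lemma replaces $h$ by its radial decreasing rearrangement $h^*$; Lemma~\ref{lem1} then yields the weighted lower bound $D_{p,\beta}\int_{\real^n}|x|^{-p\beta}|h^*(x)|^p\,dx$; and the radial decreasing pointwise bound $h^*(x)\le c\,|x|^{-n/q}\|h^*\|_{L^q}$ converts the weighted integral into
\begin{equation*}
\int_{\real^n} |x|^{-p\beta}\,|h^*(x)|^p\,dx
\ge \left[\frac{\sigma(S^{n-1})}{n}\right]^{p\beta/n} \|h^*\|_{L^q(\real^n)}^p
= \left[\frac{\sigma(S^{n-1})}{n}\right]^{p\beta/n} \|\,|\nabla f|\,\|_{L^q(\real^n)}^p\ ,
\end{equation*}
with $q = pn/(n-p\beta)$, using equimeasurability $\|h^*\|_{L^q}=\|h\|_{L^q}=\|\,|\nabla f|\,\|_{L^q}$.

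The only genuinely new ingredient is the final step, where the fractional Hardy--Littlewood--Sobolev inequality of Step~4 is replaced by its first-order endpoint, the classical gradient Sobolev inequality
\begin{equation*}
\|f\|_{L^{q^*}(\real^n)} \le c\,\|\,|\nabla f|\,\|_{L^q(\real^n)}\ ,\qquad \frac1{q^*} = \frac1q - \frac1n\ .
\end{equation*}
A direct computation gives $1/q^* = 1/p - (1+\beta)/n$, so $q^* = pn/(n-p(1+\beta))$ as claimed, and the hypothesis $1\le p<n/(1+\beta)$ is exactly what guarantees $1<q<n$, placing us in the classical range of the Sobolev embedding; equivalently one may write $\Lambda_1 f = (2\pi)^{-1}\,R\!\cdot\!\nabla f$ with $R$ the vector of Riesz transforms and combine their $L^q$ boundedness with the $\alpha=1$ case of the Hardy--Littlewood--Sobolev inequality. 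Chaining the displays proves the corollary.

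I would expect the main obstacle to be conceptual rather than computational: one must recognize that the vector-valued gradient can be reduced to the scalar modulus $h=|\nabla f|$ at the outset via the reverse triangle inequality, so that the intrinsically scalar symmetrization and Stein--Weiss apparatus transfers without modification. One should also verify that this reduction does not sacrifice the full order of smoothness; it does not, because the gradient contributes one clean derivative recovered through the ordinary Sobolev inequality, while the Besov form supplies the remaining $\beta$ derivatives exactly as in Theorem~\ref{frac-powers1}.
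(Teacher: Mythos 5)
Your proposal is correct and follows essentially the same route as the paper: the paper's proof also sets $g=|\nabla f|$, applies the Symmetrization Lemma and Steps 1--3 of Theorem~\ref{frac-powers1} to this scalar function, and then closes with the classical gradient Sobolev inequality $\big[\int|\nabla f|^q\big]^{p/q}\ge c\big[\int|f|^{q^*}\big]^{p/q^*}$ in place of Hardy--Littlewood--Sobolev. The only difference is cosmetic: you make explicit the reverse-triangle-inequality reduction from the vector-valued difference to $|h(x)-h(y)|$ with $h=|\nabla f|$, a step the paper leaves implicit.
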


\begin{proof}
Observe that by using the Aronszajn-Smith formula
$$\int_{\real^n\times\real^n} 
\frac{|(\nabla f)(x) - (\nabla f)(y)|^2}{|x-y|^{n+p\beta}}\ dx\,dy 
= \int_{\real^n\times\real^n} 
\frac{|(\Lambda_1 f)(x) - (\Lambda_1 f)(y)|^2}{|x-y|^{n+p\beta}}\ dx\,dy$$
and for $g= |\nabla f|$
$$\int_{\real^n\times\real^n} 
\frac{|(\nabla f)(x) - (\nabla f)(y)|^2}{|x-y|^{n+p\beta}}\ dx\,dy 
\ge \int_{\real^n\times\real^n} 
\frac{|g^* (x) - g^* (y)|^p}{|x-y|^{n+p\beta}}\ dx\,dy$$
Follow the steps in the proof of Theorem~\ref{frac-powers1}  where after 
Step~3 one has for $q= pn/(n-p\beta)$ and using the Sobolev inequality 
\begin{equation}\label{eq:Sobolev}
\bigg[ \int_{\real^n} |\nabla f|^q\,dx\bigg]^{p/q} 
\ge c \, \bigg[ \int_{\real^n} |f|^{q^*} \,dx \bigg]^{p/q^*}
\end{equation} 
for $q^* = pn/(n-p(1+\beta))$. 
This gives the statement of the Corollary.
\end{proof}

\section{Hausdorff-Young inequalities for fractional embedding}

The Aronszajn-Smith formula has proved to be highly useful:
\begin{equation}\label{eq-AS}
\int_{\real^n\times\real^n} 
\frac{|f(x) - f(y)|^2}{|x-y|^{n+2\beta}}\ dx\,dy 
= D_\beta \int_{\real^n} |\xi|^{2\beta} |\hat f(\xi)|^2\,d\xi
\end{equation}
By viewing this formula as an extension of the Plancherel theorem for fractional 
differentiation, one naturally looks to the role of the Hausdorff-Young inequality:
$$\int_{\real^n\times\real^n} 
\frac{|f(x) - f(y)|^p}{|x-y|^{n+p\beta}}\ dx\,dy 
\quad\rightsquigarrow\quad
\int_{\real^n} \left[ |\xi|^\beta| |\hat f(\xi)|\right]^{p'}\,d\xi$$
where $p$ and $p'$ are dual exponents.

\begin{thm}\label{thm-HY}
For $f\in \S (\real^n)$, $0<\beta<1$, $1<p<\infty$ and $1/p\ + \ 1/p' =1$
\begin{align}
\int_{\real^n\times\real^n} 
\frac{|f(x) - f(y)|^p}{|x-y|^{n+p\beta}}\ dx\,dy 
&\ge c\bigg[ \int_{\real^n} \left[ |\xi|^\beta |\hat f(\xi)|\right]^{p'}\,d\xi\bigg]^{p/p'}\ ,\qquad 
1<p\le 2\label{thm3-eq}\\
\noalign{\vskip6pt}
& \le c\bigg[ \int_{\real^n} \left[ |\xi|^\beta |\hat f(\xi)|\right]^{p'}\,d\xi\bigg]^{p/p'}\ ,\qquad 
2\le p<\infty \notag
\end{align}
\end{thm}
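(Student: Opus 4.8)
The plan is to transfer the problem to a one-parameter family of ordinary Hausdorff--Young inequalities indexed by the difference variable, exactly as the Aronszajn--Smith formula \eqref{eq-AS} does in the $L^2$ case. First I would change variables $z=x-y$ and write the left side as
\[
\int_{\real^n} \frac{\|g_z\|_{L^p(\real^n)}^p}{|z|^{n+p\beta}}\,dz, \qquad g_z(x)=f(x)-f(x-z),
\]
whose Fourier transform in $x$ is $\widehat{g_z}(\xi)=\hat f(\xi)\,(1-e^{-2\pi i z\cdot\xi})$. This is the same object that, for $p=2$, Plancherel collapses to the right side of \eqref{eq-AS}; here I keep the difference structure and treat the inner $L^p_x$ norm by Fourier analysis rather than by Plancherel.

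Next I would apply the sharp Hausdorff--Young inequality $\|\widehat{g_z}\|_{L^{p'}}\le A_p\|g_z\|_{L^p}$, with $A_p=\big(p^{1/p}/(p')^{1/p'}\big)^{n/2}$, in the $x$-variable, in the direction dictated by $p$. For $1<p\le 2$ this gives the lower bound $\|g_z\|_{L^p}^p\ge A_p^{-p}\|\widehat{g_z}\|_{L^{p'}}^p$; for $2\le p<\infty$, applying the inequality to $\widehat{g_z}$ (whose exponent $p'$ now lies in $[1,2]$) and invoking Fourier inversion gives the reverse bound $\|g_z\|_{L^p}^p\le A_{p'}^p\|\widehat{g_z}\|_{L^{p'}}^p$. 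In either case the integral becomes, up to the constant,
\[
\int_{\real^n}\bigg(\int_{\real^n} |\hat f(\xi)|^{p'}\,|1-e^{-2\pi i z\cdot\xi}|^{p'}\,d\xi\bigg)^{p/p'}\frac{dz}{|z|^{n+p\beta}}.
\]

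The heart of the argument is to exchange the $z$- and $\xi$-integrations by Minkowski's integral inequality with exponent $r=p/p'$, taken against the measure $d\mu(z)=|z|^{-n-p\beta}\,dz$. The crucial structural point is that $r$ crosses $1$ precisely at $p=2$: for $p\le 2$ one has $r\le 1$ and the \emph{reverse} Minkowski inequality yields a lower bound, while for $p\ge 2$ one has $r\ge 1$ and the ordinary Minkowski inequality yields an upper bound --- so the two direction changes (Hausdorff--Young and Minkowski) occur simultaneously and reinforce one another. After the exchange the relation $p'r=p$ collapses the $\xi$-factor to its $p$-th power, and the remaining $z$-integral
\[
\int_{\real^n} |1-e^{-2\pi i z\cdot\xi}|^p\,\frac{dz}{|z|^{n+p\beta}}=B_{p,\beta}\,|\xi|^{p\beta}
\]
is computed by the dilation $z\mapsto z/|\xi|$ and rotation invariance, with $B_{p,\beta}$ finite since the integrand is $O(|z|^{p(1-\beta)-n})$ near the origin and $O(|z|^{-n-p\beta})$ at infinity. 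Tracking the exponents ($p/r=p'$ and $p\beta/r=\beta p'$) shows the inner integral becomes $B_{p,\beta}^{p'/p}|\xi|^{\beta p'}|\hat f(\xi)|^{p'}$, and raising to the power $r$ reproduces exactly $\big[\int|\xi|^{\beta p'}|\hat f(\xi)|^{p'}\,d\xi\big]^{p/p'}$ on the right.

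The main obstacle I anticipate is the bookkeeping of the two inequality directions so that Hausdorff--Young and Minkowski always push the same way; the clean resolution is the observation above that both flip at $p=2$, where the whole chain degenerates to the Aronszajn--Smith identity with $c=D_\beta$. A secondary point requiring care is justifying Minkowski's inequality against the singular weight $d\mu$ and confirming $B_{p,\beta}<\infty$ across the full range $0<\beta<1$, which is immediate from the two-sided estimate of the integrand. The constants produced are $c=A_p^{-p}B_{p,\beta}$ for $1<p\le 2$ and $c=A_{p'}^p B_{p,\beta}$ for $2\le p<\infty$; since the statement asserts only a generic $c$, no sharpness claim is needed and finiteness of $B_{p,\beta}$ suffices.
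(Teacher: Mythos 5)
Your proposal is correct and follows essentially the same route as the paper: rewrite the Besov seminorm as an integral over the difference variable of $\|f(\cdot+w)-f(\cdot)\|_{L^p}^p$, apply the Hausdorff--Young inequality in the translation variable, interchange the two integrations by Minkowski's integral inequality with exponent $p/p'$ (which, as you note, reverses direction exactly at $p=2$ in step with Hausdorff--Young), and evaluate $\int|1-e^{2\pi i w\cdot\xi}|^p|w|^{-n-p\beta}\,dw$ by dilation to produce the factor $|\xi|^{p\beta}$. Your explicit verification of the finiteness of that integral and of the simultaneous direction flip at $p=2$ is sound and matches the paper's argument.
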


\begin{proof} 
Apply the Hausdorff-Young inequality for $1< p \le 2$
\begin{align*}
\int_{\real^n\times\real^n} 
\frac{|f(x) - f(y)|^p}{|x-y|^{n+p\beta}}\ dx\,dy 
& = \int_{\real^n} \frac1{|w|^{n+p\beta}} 
\bigg[ \int_{\real^n} |f(x+w) - f(x)|^p\,dx\bigg]\, dw\\
\noalign{\vskip6pt}
&\ge (\chy)^p 
\int_{\real^n} \frac1{|w|^{n+p\beta}} 
\bigg[ \int_{\real^n} |e^{2\pi iw\cdot\xi} - 1|^{p'}\ |\hat f(\xi)|^{p'}\,d\xi\bigg]^{p/p'}\, dw\\
\noalign{\vskip6pt}
&\ge (\chy)^p 
\Bigg[ \int_{\real^n} |\hat f(\xi)|^{p'} 
\bigg[ \int_{\real^n} | e^{2\pi iw \cdot\xi} -1|^p\ 
\frac1{|w|^{n+p\beta}}\, dw\bigg]^{p'/p}\,d\xi \Bigg]^{p/p'}\\
\noalign{\vskip6pt}
&\ge (\chy)^p 
\int_{\real^n} |e^{2\pi iw\cdot\eta} -1|^p 
\frac1{|w|^{n+p\beta}}\, dw 
\bigg[ \int_{\real^n} \left[ |\xi|^\beta |\hat f(\xi)|\right]^{p'}\, d\xi\bigg]^{p/p'}\\
\noalign{\vskip6pt}
& = c\ \bigg[ \int_{\real^n} \left[ |\xi|^\beta |\hat f(\xi)|\right]^{p'}\,d\xi\bigg]^{p/p'}
\end{align*}
where $\eta \in S^{n-1}$, $\chy$ is the Hausdorff-Young constant and 
Minkowski's inequality for integrals was used to obtain the last inequality above.

For $p\ge2$ one just needs to reverse the string of inequalities:
\begin{align*}
&\int_{\real^n} \frac1{|w|^{n+p\beta}} \ 
\bigg[ \int_{\real^n} |f(x+w) - f(x)|^p\,dx\bigg]\,dw\\
\noalign{\vskip6pt}
&\qquad 
\le  (\chy)^p 
\int_{\real^n} \frac1{|w|^{n+p\beta}}\ 
\bigg[ \int_{\real^n} |e^{2\pi iw\cdot\xi} -1|^{p'} \ 
|\hat f(\xi)|^{p'}\,d\xi \bigg]^{p/p'}\, dw\\
\noalign{\vskip6pt}
&\qquad 
\le  (\chy)^p 
\Bigg[ \int_{\real^n} |\hat f(\xi)|^{p'}\,d\xi\ 
\bigg[ \int_{\real^n} |e^{2\pi iw\cdot\xi} -1|^p \ 
\frac1{|w|^{n+p\beta}} \,dw\bigg]^{p'/p}\,d\xi \Bigg]^{p/p'} \\
\noalign{\vskip6pt}
&\qquad 
=  (\chy)^p 
\int_{\real^n} |e^{2\pi iw\cdot\eta} -1|^p 
\frac1{|w|^{n+p\beta}}\ dw 
\bigg[ \int_{\real^n} |\xi|^\beta |\hat f(\xi)|^{p'}\,d\xi\bigg]^{p/p'} \\
\noalign{\vskip6pt}
&\qquad = c\ \bigg[ \int_{\real^n} \left[ |\xi|^\beta |\hat f(\xi)|\right]^{p'}\,d\xi \bigg]^{p/p'}
\end{align*}
Here 
$$c_{\text{h-y}} = \left[ p^{1/p} / p^{\prime^{1/p'}} \right]^{-n/2}\ .$$
\renewcommand{\qed}{}
\end{proof}

\section{Bilinear fractional embedding} 

Rigorous models for collision dynamics in plasmas suggest that bilinear fractional 
embedding estimates will be useful.
Here the analysis developed above is applied to the forms:
$$\int_{\real^n\times\real^n} 
\frac{|f(x) \nabla g(y) - f(y)\nabla g(x)|}{|x-y|^{n+\lambda}}\,dx\,dy 
\quad\rightsquigarrow\quad 
\int_{\real^n\times\real^n}  
\frac{|f(x)   g(y) - f(y)  g(x)|^p}{|x-y|^{n+\lambda}}\,dx\,dy 
$$
As expected from classical Fourier analysis, these forms can be related to convolution 
formulas --- both on the function side and on the Fourier transform side.

\begin{thm}\label{thm4}
For real-valued $f\in \S(\real^n)$, $0<\lambda <1$ and $\widetilde f (x) = f(-x)$
\begin{equation}\label{eq:thm4}
\begin{split}
&\int_{\real^n\times\real^n} |x-y|^{-n-\lambda} 
|f(x) (\nabla f)(y) - f(y) (\nabla f) (x)|\,dx\,dy\\
\noalign{\vskip6pt}
&\qquad \qquad
\ge 2 \int_{\real^n} |x|^{-n-\lambda} |\nabla (f*\widetilde f\,) (x)|\,dx 
\end{split}
\end{equation}
\end{thm}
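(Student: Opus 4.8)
The plan is to reduce the bilinear expression on the left to a convolution and then apply the triangle-inequality/reduction machinery already developed. First I would observe that the antisymmetric combination $f(x)(\nabla f)(y) - f(y)(\nabla f)(x)$ is, up to sign, the commutator-type kernel that arises naturally when one writes $\nabla(f*\widetilde f\,)$ explicitly: since $(f*\widetilde f\,)(z) = \int f(x) f(x-z)\,dx$, one has $\nabla(f*\widetilde f\,)(z) = \int f(x)(\nabla f)(x-z)\,dx$ after differentiating under the integral and accounting for the parity of $\widetilde f$. The change of variables $x\mapsto x$, $y = x - z$ then converts a double integral against $|x-y|^{-n-\lambda}$ into a single integral in the difference variable $z$ weighted by $|z|^{-n-\lambda}$, which is precisely the target form on the right-hand side of \eqref{eq:thm4}.

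The key steps, in order, would be: (i) set $w = x-y$ and write the left side of \eqref{eq:thm4} as
\[
\int_{\real^n} |w|^{-n-\lambda}\bigg[\int_{\real^n} |f(x)(\nabla f)(x-w) - f(x-w)(\nabla f)(x)|\,dx\bigg]\,dw;
\]
(ii) apply the triangle inequality in the inner integral, moving the absolute value outside, to bound the bracket below by
\[
\bigg|\int_{\real^n}\big[f(x)(\nabla f)(x-w) - f(x-w)(\nabla f)(x)\big]\,dx\bigg|;
\]
(iii) recognize each of the two inner integrals as a value of $\nabla(f*\widetilde f\,)$ evaluated at $\pm w$, using the real-valuedness of $f$ together with $\widetilde f(x)=f(-x)$; and (iv) combine the two contributions, exploiting the symmetry $\nabla(f*\widetilde f\,)(-w) = -\nabla(f*\widetilde f\,)(w)$ (which follows because $f*\widetilde f$ is even when $f$ is real), to produce the factor $2$ and arrive at $2\int |w|^{-n-\lambda}|\nabla(f*\widetilde f\,)(w)|\,dw$.

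The main obstacle I anticipate is step (ii)–(iv): correctly tracking which of the two inner integrals equals $\nabla(f*\widetilde f\,)(w)$ versus $\nabla(f*\widetilde f\,)(-w)$, and verifying that the triangle inequality collapses the difference of two convolution values into twice a single one rather than into their difference. Since the pointwise triangle inequality $|A-B|\ge |\,|A|-|B|\,|$ generically yields the \emph{difference} of magnitudes, the gain of the clean factor $2$ hinges on the parity structure forcing the two inner integrals to be equal and opposite in sign, so that $|A-B| = |A| + |B| = 2|A|$. I would therefore devote the bulk of the argument to checking the parity bookkeeping, which is the only place where the hypotheses ``$f$ real-valued'' and $\widetilde f(x)=f(-x)$ are genuinely used. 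The remaining manipulations — differentiation under the integral sign and Fubini to justify the interchange of the $w$-integration with the inner integral — are routine for $f\in\S(\real^n)$ given $0<\lambda<1$, which keeps $|w|^{-n-\lambda}$ locally integrable against the smooth, rapidly decaying integrand.
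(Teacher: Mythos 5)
Your argument is correct and is essentially the paper's proof: pass to the difference variable, pull the absolute value outside the inner integral, and observe that the two resulting terms are equal and opposite in sign --- the paper phrases this as an integration by parts in the inner variable, which is exactly the same fact as your oddness of $\nabla (f*\widetilde f\,)$, and either way yields $|A-B|=2|A|$ and the clean factor $2$. The only slip is a harmless sign: differentiating $(f*\widetilde f\,)(z)=\int_{\real^n} f(x)f(x-z)\,dx$ under the integral gives $-\int_{\real^n} f(x)(\nabla f)(x-z)\,dx$, not $+$, but the absolute value absorbs this.
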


\begin{proof}
\begin{equation*}
\begin{split}
&\int_{\real^n\times\real^n} |x-y|^{-n-\lambda} |f(x) (\nabla f)(y) - f(y) (\nabla f)(x)|\,dx\,dy\\
\noalign{\vskip6pt}
&\qquad = \int_{\real^n\times\real^n} |x|^{-n-\lambda} 
  |f(x+y) (\nabla f)(y) - f(y) (\nabla f)(x+y)|\,dx\,dy\\
\noalign{\vskip6pt}
&\qquad \ge \int_{\real^n} |x|^{-n-y} 
\Big| \int_{\real^n} \big[ f(x+y) (\nabla f(y) - f(y) (\nabla f)(x+y)\big]\,dy \Big|\,dx\\
\noalign{\vskip6pt}
&\qquad = \int_{\real^n} |x|^{-n-\lambda} \Big| -2 \int_{\real^n} (\nabla f) (x+y) f(y)\,dy\Big|\, dx\\
\noalign{\vskip6pt}
&\qquad = 2 \int_{\real^n} |x|^{-n-\lambda} \big| \nabla (f*\widetilde f\,) (x)\big|\,dx
\end{split}
\end{equation*}
Observe that 
$$2\nabla (f*\widetilde f\,) (0) 
= \int_{\real^n} \nabla (f^2)\,dy =0$$
so the integrand above is locally integrable at $x=0$ for $0<\lambda <1$.
\renewcommand{\qed}{}
\end{proof}

\begin{thm}\label{thm5}
For $f,g\in \S(\real^n)$, $1<p \le 2$, $1/p + 1/q =1$ and $0<\lambda <q$
\begin{equation}\label{eq:thm5}
\begin{split}
&\int_{\real^n\times\real^n} |x-y|^{-n-\lambda} |f(x) g(y) - f(y) g(x)|^q\, dx\,dy\\
\noalign{\vskip6pt}
&\qquad\qquad 
\le c\bigg[ \int_{\real^n} |H_{\lambda/q} (u)|^p\, du\bigg]^{q/p}
\end{split}
\end{equation}
where 
$$H_{\lambda/q} (u) = \int_{\real^n} |v|^{\lambda/q} \Big| \widehat f \Big(\frac{u+v}2\Big)
\widehat g\Big( \frac{u-v}2\Big)\Big|\,dv\ .$$
\end{thm}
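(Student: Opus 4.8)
The plan is to mirror the proof of Theorem~\ref{thm-HY}, exploiting that the hypothesis $1<p\le 2$ simultaneously powers the Hausdorff-Young inequality and fixes the favorable direction of Minkowski's inequality for integrals. First I would strip off the kernel by the translation-invariant change of variables $x=s$, $y=s-w$, writing
\begin{equation*}
\int_{\real^n\times\real^n} |x-y|^{-n-\lambda}|f(x)g(y)-f(y)g(x)|^q\,dx\,dy
= \int_{\real^n} |w|^{-n-\lambda}\,\|\Phi_w\|_{L^q(\real^n)}^q\,dw ,
\end{equation*}
where $\Phi_w(s)=f(s)g(s-w)-f(s-w)g(s)$. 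Since $q=p'\ge 2$, the Hausdorff-Young inequality applies in the form $\|\Phi_w\|_{L^q}\le \chy\,\|\widehat{\Phi_w}\|_{L^p}$, transferring the estimate to the Fourier side.

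The second step is to compute $\widehat{\Phi_w}$. Writing each product as a convolution of transforms and introducing the symmetric frequency variables $u=\zeta$ (the total frequency) and $v=\zeta-2\xi$, so that the arguments of $\hat f,\hat g$ become $(u+v)/2$ and $(u-v)/2$, the two phase factors combine into $2i\,e^{-\pi i w\cdot u}\sin(\pi w\cdot v)$, giving
\begin{equation*}
\widehat{\Phi_w}(u) = 2^{1-n}i\,e^{-\pi i w\cdot u}\int_{\real^n} \widehat f\Big(\tfrac{u+v}2\Big)\widehat g\Big(\tfrac{u-v}2\Big)\sin(\pi w\cdot v)\,dv .
\end{equation*}
This is where the precise form of $H_{\lambda/q}$ is forced on us, so it is the computational heart of the argument, and the bookkeeping of the sum/difference substitution is the place most likely to hide a sign or Jacobian slip.

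The third step is a double application of Minkowski's inequality for integrals. Because $q\ge p$, one has the mixed-norm bound in the upper-bound direction, $\big\|\,|w|^{-(n+\lambda)/q}\widehat{\Phi_w}\,\big\|_{L^q_w(L^p_u)}\le \big\|\,|w|^{-(n+\lambda)/q}\widehat{\Phi_w}\,\big\|_{L^p_u(L^q_w)}$; then, inside the inner $L^q_w$ norm, Minkowski again moves the $w$-integration past the $v$-integration defining $\widehat{\Phi_w}$. What remains is the scalar kernel integral $\int_{\real^n}|\sin(\pi w\cdot v)|^q|w|^{-n-\lambda}\,dw$, which by rotating $v$ onto a coordinate axis and rescaling equals $C_{\lambda,q}|v|^{\lambda}$ with $C_{\lambda,q}=\int_{\real^n}|\sin(\pi w_1)|^q|w|^{-n-\lambda}\,dw$. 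The constraint $0<\lambda<q$ is exactly the condition for this constant to be finite (the bound $\lambda<q$ controls the singularity at $w=0$, and $\lambda>0$ controls decay at infinity). Collecting the resulting factor $|v|^{\lambda/q}$ reconstitutes $H_{\lambda/q}(u)$, and the estimate follows with $c=\chy^{\,q}\,2^{(1-n)q}\,C_{\lambda,q}$.

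I expect the main obstacle to be keeping the two Minkowski interchanges pointed in the correct direction: both rest on $q\ge p$, that is, on the very hypothesis $p\le 2$ that also licenses Hausdorff-Young, and reversing either interchange would produce an inequality running the wrong way. A secondary point is the routine but necessary justification — for $f,g\in\S(\real^n)$ — of the interchanges of integration, the uniform-in-$w$ use of Hausdorff-Young, and the finiteness of $C_{\lambda,q}$ precisely on the range $0<\lambda<q$.
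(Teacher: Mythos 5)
Your proposal is correct and follows essentially the same route as the paper: the translation change of variables, Hausdorff--Young applied on the Fourier side (the paper writes the Fourier inversion integral explicitly and estimates the $y$-integral, which is the same as your $\|\Phi_w\|_{L^q}\le \chy\|\widehat{\Phi_w}\|_{L^p}$), the substitution $u=\xi+\eta$, $v=\xi-\eta$ producing the factor $e^{-\pi i w\cdot u}\,\Delta(w\cdot v)$ with $\Delta(t)=2i\sin(\pi t)$, two applications of Minkowski's integral inequality in the direction permitted by $q\ge p$, and the homogeneity computation $\int |w|^{-n-\lambda}|\sin(\pi w\cdot v)|^q\,dw=C|v|^{\lambda}$, finite exactly for $0<\lambda<q$. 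Your identification of the two Minkowski interchanges and the finiteness of the kernel constant as the delicate points matches where the paper's own argument does the real work.
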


\begin{proof}
\begin{equation*}
\begin{split}
&\int_{\real^n\times\real^n} |x-y|^{-n-\lambda}  |f(x) g(y) - f(y) g(x)|^q\,dx\,dy\\
\noalign{\vskip6pt}
&\qquad 
= \int_{\real^n\times\real^n} |x|^{-n-\lambda} |f(x+y)g(y) - f(y)g(x+y)|^q\,dx\,dy\\
\noalign{\vskip6pt}
&\qquad 
= \int_{\real^n\times\real^n}  |x|^{-n-\lambda} \Big| \int_{\real^n\times\real^n} 
e^{-2\pi iy(\xi+n)} \left[ e^{-2\pi ix\xi} - e^{-2\pi ix\eta} \right] 
\widehat f(\xi) \widehat g (\eta) \,d\xi\,d\eta\Big|^q\,dx\,dy\\
\noalign{\vskip6pt}
&\qquad 
= 2^{-q} \int_{\real^n\times\real^n} |x|^{-n-\lambda} 
\Big| \int_{\real^n} e^{-2\pi iyu} \int_{\real^n} \Delta (xv) \widehat f \Big(\frac{u+v}2\Big) 
\widehat g\Big( \frac{u-v}2\Big)\, du\,dv\Big|^q\,dx\,dy
\end{split}
\end{equation*}
where 
$$\Delta (xv) = e^{\pi ixv} - e^{-\pi ixv}\ .$$
By applying the Hausdorff-Young inequality for $q\ge 2$, one obtains the upper bound
$$\Big(\frac{\chy}2\Big)^q \int_{\real^n} |x|^{-n-\lambda} 
\bigg[ \int_{\real^n} \left| \int_{\real^n} |\Delta (xv)|\,
\Big| \widehat f\, \Big(\frac{u+v}2\Big) \widehat g\Big(\frac{u-v}2\Big)\Big| \,dv\right|^p \, du
\bigg]^{q/p}\,dx\ .$$
The next objective is to interchange the $x$-integration with both integrals for $u$ and $v$
by using Minkowski's inequality for integrals twice.
This gives
\begin{equation*}
\begin{split}
&\le \Big(\frac{\chy}2\Big)^q 
\bigg[ \int_{\real^n} \bigg[ \int_{\real^n} |x|^{-n-\lambda} 
\left| \int_{\real^n} |\Delta (uv)|\, 
\Big|\widehat f\,\Big(\frac{u+v}2\Big)  \widehat g\,\Big(\frac{u-v}2\Big)\Big| \, dv \right|^q\,dx
\bigg]^{p/q} \,du\bigg]^{q/p}\\
\noalign{\vskip6pt}
&\le \Big(\frac{\chy}2\Big)^q
\bigg[ \int_{\real^n} \bigg[ \int_{\real^n} 
\bigg( \int_{\real^n} |x|^{-n-\lambda} |\Delta (xv)|^q\,dx\bigg)^{1/q} 
\Big|\widehat f\,\Big(\frac{u+v}2\Big)  \widehat g\,\Big(\frac{u-v}2\Big)\Big| \, dv 
\bigg]^p\, du\bigg]^{q/p}\\
\noalign{\vskip6pt}
& = c \Big(\frac{\chy}2\Big)^q 
\bigg[ \int_{\real^n} \bigg[ \int_{\real^n} 
|v|^{\lambda/q} 
\Big|\widehat f\,\Big(\frac{u+v}2\Big)  \widehat g\,\Big(\frac{u-v}2\Big)\Big| \, dv 
\bigg]^p\, du\bigg]^{q/p}\\
\noalign{\vskip6pt}
& = c \Big(\frac{\chy}2\Big)^q
\bigg[ \int_{\real^n} |H_{\lambda/q} (u)|^p \,du\bigg]^{q/p}
\end{split}
\end{equation*}
where $\chy$ is the Hausdorff-Young constant and 
$$c = \int_{\real^n} |x|^{-n-\lambda} |\Delta (x\cdot\eta)|^q\,dx\ ,\qquad 
\eta \in S^{n-1}$$
where $0<\lambda <q$.
\end{proof}

More careful analysis of the bilinear embedding integral for $p=2$ will give the analogue 
of the classical Aronszajn-Smith identity.

\begin{thm}\label{thm6}
For $f,g\in \S(\real^n)$ and $0<\lambda <2$
\begin{equation}\label{eq:thm6}
\begin{split}
&\int_{\real^n\times\real^n} |x-y|^{-n-\lambda} 
|f(x) g(y)- f(y)g(x)|^2 \,dx\,dy\\
\noalign{\vskip6pt}
&\quad = c\int_{\real^n\times\real^n\times\real^n}\mkern-60mu
|v_1 + v_2|^\lambda \widehat f\Big(\frac{u+v_1}2\Big) \widehat g \Big(\frac{u-v_1}2\Big) 
\left[ \skew6\bar{\widehat f} \Big( \frac{u+v_2}2\Big) 
\bar{\widehat g}\Big( \frac{u-v_2}2\Big) 
- \skew6\bar{\widehat f} \Big( \frac{u-v_2}2\Big) 
\bar{\widehat g} \Big(\frac{u+v_2}2\Big)\right]\, du\,dv_1\,dv_2
\end{split}
\end{equation}
where
$$c = \Big( \frac{\pi}2\Big)^\lambda\ \frac{\pi^{n/2}}{\lambda}\ 
\frac{\Gamma (1-\frac{\lambda}2)}{\Gamma (\frac{n+\lambda}2)}\ .$$
\end{thm}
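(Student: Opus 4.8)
The plan is to run the same opening moves as in Theorems~\ref{thm4} and \ref{thm5}, but to replace Hausdorff--Young by Plancherel, so that at $p=2$ the inequality becomes an identity. First I would translate $x\mapsto x+y$ to write the left side as $\int_{\real^n}|x|^{-n-\lambda}\big[\int_{\real^n}|\Phi_x(y)|^2\,dy\big]\,dx$ with $\Phi_x(y)=f(x+y)g(y)-f(y)g(x+y)$, and then Fourier-expand $\Phi_x$ exactly as in the proof of Theorem~\ref{thm5}. Passing to the sum and difference frequencies $\xi=(u+v)/2$, $\eta=(u-v)/2$ produces the odd kernel $\Delta(xv)=e^{\pi ixv}-e^{-\pi ixv}$, so that the $y$-Fourier transform of $\Phi_x$ at the center-of-mass frequency $u$ is, up to a unimodular factor, $2^{-n}\int_{\real^n}\Delta(xv)\,\widehat f(\tfrac{u+v}2)\widehat g(\tfrac{u-v}2)\,dv$. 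The antisymmetry of the commutator $f(x)g(y)-f(y)g(x)$ under $x\leftrightarrow y$ is precisely what forces the odd kernel $\Delta$ here.

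Next I would apply Plancherel in $y$, so that $\int_{\real^n}|\Phi_x(y)|^2\,dy$ becomes the integral over $u$ of the squared modulus of that transform. Expanding the modulus square introduces two difference variables $v_1,v_2$; since $\Delta(xv)=2i\sin(\pi x\cdot v)$ is purely imaginary, the cross term collapses to $\Delta(xv_1)\overline{\Delta(xv_2)}=4\sin(\pi x\cdot v_1)\sin(\pi x\cdot v_2)$, and a product-to-sum identity rewrites this as $2\cos(\pi x\cdot(v_1-v_2))-2\cos(\pi x\cdot(v_1+v_2))$. At this stage the whole expression is a triple integral in $(u,v_1,v_2)$ whose inner factor is the radial singular integral $\int_{\real^n}|x|^{-n-\lambda}[\,\cdots\,]\,dx$.

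The key evaluation is this inner integral. Writing each cosine term as $1-\cos$ and invoking the Aronszajn--Smith constant $D_\beta$ (see \eqref{eq6} and \eqref{eq-AS}) with $\beta=\lambda/2$ and the arguments rescaled by a half, the radial integral evaluates to a multiple of $|v_1+v_2|^\lambda-|v_1-v_2|^\lambda$; the half-rescaling $\cos(\pi x\cdot w)=\cos(2\pi x\cdot(w/2))$ is the source of the $(\pi/2)^\lambda$ in the stated constant. The final structural step is to fold the two pieces together: in the term carrying $|v_1-v_2|^\lambda$ I would substitute $v_2\mapsto-v_2$, which turns $|v_1-v_2|^\lambda$ into $|v_1+v_2|^\lambda$ and interchanges $\widehat f(\tfrac{u+v_2}2)\widehat g(\tfrac{u-v_2}2)$ with $\widehat f(\tfrac{u-v_2}2)\widehat g(\tfrac{u+v_2}2)$. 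Adding the two contributions assembles exactly the antisymmetric bracket displayed in \eqref{eq:thm6}, with the common weight $|v_1+v_2|^\lambda$, and collecting the dyadic Jacobian factors together with $D_{\lambda/2}$ yields $c$.

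The main obstacle is analytic rather than algebraic: the radial integral $\int_{\real^n}|x|^{-n-\lambda}\cos(\pi x\cdot w)\,dx$ is only conditionally (distributionally) convergent at the origin, so the Fubini and Minkowski interchanges that move it inside the $(u,v_1,v_2)$ integrations must be justified on the $1-\cos$ regularization rather than termwise. This is where the antisymmetric commutator structure pays off --- just as in Theorem~\ref{thm4}, where $2\nabla(f*\widetilde f)(0)=0$ rescues local integrability --- since the bracket in \eqref{eq:thm6} vanishes on the diagonal $v_1=v_2$ and offsets the growth of the weight $|v_1+v_2|^\lambda$, securing absolute convergence of the triple integral for $0<\lambda<2$ and $f,g\in\S(\real^n)$. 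Pinning down the precise constant then reduces to careful bookkeeping of the factors of $2$ from the sum/difference change of variables and from the half-argument rescaling in the Aronszajn--Smith evaluation.
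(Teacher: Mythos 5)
Your proposal is correct and follows exactly the route the paper intends: the paper gives no written proof of Theorem~\ref{thm6} beyond the remark that $\int_{\real^n}|w|^{-n-\lambda}(1-\cos w\cdot\eta)\,dw$ was evaluated in \cite{Beckner-Forum}, and your reconstruction --- rerunning the Theorem~\ref{thm5} computation at $p=2$ with Plancherel in place of Hausdorff--Young, expanding $\Delta(xv_1)\overline{\Delta(xv_2)}$ by the product-to-sum identity, evaluating the radial $x$-integral on the $1-\cos$ regularization to produce $|v_1+v_2|^\lambda-|v_1-v_2|^\lambda$, and folding the second term by $v_2\mapsto-v_2$ to assemble the antisymmetric bracket --- is precisely that argument with the details filled in. One small correction: the absolute convergence of the final triple integral is secured by the Schwartz decay of $\widehat f,\widehat g$ (the bracket vanishes at $v_2=0$, not on $v_1=v_2$), while the genuine cancellation is needed, as you rightly emphasize, only at $x=0$ via the $1-\cos$ pairing.
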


\noindent 
Here the value of the integral 
$$\int_{\real^n} \frac1{|w|^{n+\lambda}} (1-\cos w\cdot\eta)\,dw 
= \frac{2^{1-\lambda} \pi^{n/2}}{\lambda}\ 
\frac{\Gamma(1-\frac{\lambda}2)}{\Gamma (\frac{n+\lambda}2)}$$
was calculated in the appendix of  \cite{Beckner-Forum}. 

Functional product decomposition coupled with fractional embedding norms modeled 
after plasma collision dynamics can be used to determine new inequalities of 
Bourgain-Brezis-Mironceau type.

\begin{thm}\label{thm7}
For $f,g\in \S(\real^n)$, $0<\beta <1$ and$1\le p<n/\beta$
\begin{equation}\label{eq:thm7}
\begin{split} 
&\int_{\real^n\times\real^n\times\real^n\times\real^n} 
\left[ (x-u)^2 + (y-v)^2\right]^{-(2n+p\beta)/2} 
|f(x) g(y) - f(u) g(v)|^p\, dx\,dy\,du\,dv\\
\noalign{\vskip6pt}
&\qquad \ge c \left\{ \begin{array}{l}
\Big[ \|f\|_{L^p (\real^n)} \|g\|_{L^q(\real^n)}\Big]^p\\
\noalign{\vskip6pt}
\Big[ \|g\|_{L^p(\real^n)} \|f\|_{L^q(\real^n)}\Big]^p
\end{array} \right.
\end{split}
\end{equation} 
where $q= pn /(n-p\beta)$. 
For $p=2$, the sharp value of the constant is given by 
$$c = \frac{2\pi^{\beta+n}}{\beta}\ 
\frac{\Gamma (1-\beta)}{\Gamma (\frac{n}2 -\beta)}\ 
\frac{\Gamma (\frac{n}2 +\beta)}{\Gamma (n+\beta)}\ 
\left[ \frac{\Gamma (\frac{n}2)}{\Gamma (n)} \right]^{2\beta/n}$$
\end{thm}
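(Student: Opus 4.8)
The plan is to read the left side of \eqref{eq:thm7} as a single Besov seminorm in dimension $2n$ and then to unravel it asymmetrically so as to recover the product of $L^p$ and $L^q$ norms. Writing $X=(x,y)$, $U=(u,v)\in\real^{2n}$ and $h=f\otimes g$, the kernel is $[(x-u)^2+(y-v)^2]^{-(2n+p\beta)/2}=|X-U|^{-(2n+p\beta)}$, so the left side is exactly $\int_{\real^{2n}\times\real^{2n}}|h(X)-h(U)|^p\,|X-U|^{-(2n+p\beta)}\,dX\,dU$. Note first that the measure-preserving change of variables $(x,y,u,v)\mapsto(y,x,v,u)$ shows this functional is invariant under $f\leftrightarrow g$, so it suffices to produce the single lower bound $c\,[\,\|f\|_{L^p}\|g\|_{L^q}\,]^p$; the companion bound then follows by symmetry. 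The naive move---applying Theorem~\ref{frac-powers1} with $\alpha=0$ directly in dimension $2n$---is tempting but gives the wrong target: the $2n$-dimensional rearrangement produces $(\int_{\real^{2n}}|h|^Q)^{p/Q}=\|f\|_{L^Q}^p\|g\|_{L^Q}^p$ with the symmetric exponent $Q=2pn/(2n-p\beta)$, and $\|f\|_{L^Q}\|g\|_{L^Q}$ is not comparable to $\|f\|_{L^p}\|g\|_{L^q}$. The resolution is to integrate out one pair of variables before symmetrizing.

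Concretely, for general $p$ I would proceed in four steps. First, translate by $s=x-u$, $t=y-v$ to write the left side as $\int_{\real^n\times\real^n}(|s|^2+|t|^2)^{-(2n+p\beta)/2}\big[\int_{\real^n\times\real^n}|f(u+s)g(v+t)-f(u)g(v)|^p\,du\,dv\big]\,ds\,dt$. Second, for fixed $s,t,v$ apply the reverse triangle inequality in $L^p(du)$ (the Triangle Inequality Lemma, with the two functions $g(v+t)f(\cdot+s)$ and $g(v)f(\cdot)$) to obtain $\int_{\real^n}|f(u+s)g(v+t)-f(u)g(v)|^p\,du\ge\|f\|_{L^p}^p\,\big|\,|g(v+t)|-|g(v)|\,\big|^p$, using translation invariance $\|f(\cdot+s)\|_{L^p}=\|f\|_{L^p}$. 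Third, the resulting lower bound is independent of $s$, so the $s$-integration is exact: $\int_{\real^n}(|s|^2+|t|^2)^{-(2n+p\beta)/2}\,ds=C_1\,|t|^{-n-p\beta}$ with $C_1=\pi^{n/2}\Gamma(\tfrac n2+\tfrac{p\beta}2)/\Gamma(n+\tfrac{p\beta}2)$. Fourth, the remaining double integral is precisely the Bourgain--Brezis--Mironescu seminorm of $|g|$, so Theorem~\ref{frac-powers1} with $\alpha=0$ applied to $|g|$ yields $\int_{\real^n\times\real^n}\big|\,|g|(w)-|g|(v)\,\big|^p\,|w-v|^{-n-p\beta}\,dw\,dv\ge c\,\|g\|_{L^q}^p$ with $q=pn/(n-p\beta)$. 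Combining, the left side is $\ge C_1\,c\,[\,\|f\|_{L^p}\|g\|_{L^q}\,]^p$, and the $f\leftrightarrow g$ symmetry supplies the second alternative.

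For the sharp constant at $p=2$ I would run the same reduction but track constants, taking for the last step the sharp Bourgain--Brezis--Mironescu value recorded in the introduction. The constant becomes the product $C_1\cdot c^{\mathrm{BBM}}_{2,\beta}$, and a short Gamma-function simplification---using $\Gamma(2-\beta)=(1-\beta)\Gamma(1-\beta)$, $\Gamma(\tfrac n2+1-\beta)=(\tfrac n2-\beta)\Gamma(\tfrac n2-\beta)$ and $(n-2\beta)/(\tfrac n2-\beta)=2$---collapses it exactly to the stated value. As an independent check, and to certify optimality, I would compute the $p=2$ left side via the Aronszajn--Smith formula in dimension $2n$: it equals $D_\beta^{(2n)}\int_{\real^n\times\real^n}(|\xi|^2+|\eta|^2)^\beta|\hat f(\xi)|^2|\hat g(\eta)|^2\,d\xi\,d\eta$ with $D_\beta^{(2n)}=\tfrac2\beta\pi^{n+2\beta}\Gamma(1-\beta)/\Gamma(n+\beta)$; bounding $(|\xi|^2+|\eta|^2)^\beta\ge|\eta|^{2\beta}$ and then using Plancherel in $f$ together with Lieb's dual Hardy--Littlewood--Sobolev inequality in $g$ gives the same constant $D_\beta^{(2n)}c_\beta$.

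The main obstacle is optimality rather than the inequality itself. The reverse triangle step is strict for every fixed $s\neq0$, so the four-step argument by itself only certifies a valid lower bound; to know that $C_1\,c^{\mathrm{BBM}}_{2,\beta}$ is genuinely the extremal constant one must exhibit a saturating family. This is exactly where the Aronszajn--Smith route earns its place: its sole inequality, $(|\xi|^2+|\eta|^2)^\beta\ge|\eta|^{2\beta}$, degenerates to equality in the limit where $\hat f$ concentrates at the frequency origin (so that $|\xi|\ll|\eta|$ on the effective support) while $g$ is taken to be the Lieb extremizer, and this same limit saturates the dual Hardy--Littlewood--Sobolev inequality. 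A minor point to dispatch along the way is that Theorem~\ref{frac-powers1} is being applied to $|g|$, which is merely measurable; this is harmless because the lower bound there passes through the symmetrization $(|g|)^*=g^*$ and never uses smoothness of the function.
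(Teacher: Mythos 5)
Your proposal is correct and follows essentially the same route as the paper: reduce out the $f$-variables via the reverse triangle inequality in $L^p$ (producing $\|f\|_{L^p}^p$ times $\int_{\real^n}(1+|x|^2)^{-(2n+p\beta)/2}dx$ times the one-variable Besov functional of $g$), apply the Bourgain--Brezis--Mironescu bound to $g$, invoke the $f\leftrightarrow g$ symmetry for the second alternative, and obtain the sharp $p=2$ constant from the Aronszajn--Smith formula on $\real^{2n}$ together with Plancherel in $f$ and Lieb's dual Hardy--Littlewood--Sobolev inequality in $g$. The only cosmetic difference is that the paper symmetrizes in $(x,u)$ and $(y,v)$ before applying the triangle inequality lemma, whereas you defer the symmetrization of $g$ to the invocation of Theorem~\ref{frac-powers1}; your Gamma-function bookkeeping reproduces the stated constant exactly.
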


\begin{proof}
By applying the Symmetrization Lemma separately in $(x,u)$ and $(y,v)$, the integral 
is reduced by using the equimeasurable radial decreasing rearrangements of $f$ and $g$.
Then apply the triangle inequality lemma for the function $f$ to obtain the lower bound:
\begin{gather*}
\int_{\real^n} |f|^p\,dx \int_{\real^n} (1+|x|^2)^{-(2n+p\beta)/2}\,dx 
\int_{\real^n\times\real^n} |y-v|^{-n-p\beta} 
|g^* (y) - g^* (v)|^p\,dy\,dv\\
\noalign{\vskip6pt}
\ge c \int_{\real^n} |f|^p\,dx \bigg( \int_{\real^n} |g|^q\,dx\bigg)^{p/q}\ ,\qquad 
q = \frac{pn}{n-p\beta}\ .
\end{gather*}
Since the fractional embedding norm is symmetric in the role of the functions $f$ and $g$, 
this gives equation~\eqref{eq:thm7}.
Observe that in fact the finiteness of the embedding norm implies that 
$f,g\in L^p (\real^n)\cap L^q (\real^n)$. 
The sharp value of the constant for $p=2$ can be obtained using the Aronszajn-Smith 
formula for $\real^{2n}$ and then applying Theorem~\ref{frac-powers2} for $\alpha =0$.
\end{proof}

While this analysis is instructive and perhaps useful in understanding collision dynamics, 
it does reflect that restriction to product functions will not accurately show the appropriate 
control by multilinear embedding (see discussion on page 185 in \cite{Beckner2012}).

\section{Fractional smoothing on the Heisenberg group}

The Heisenberg group is a natural setting to study problems with mixed homogeneity.
As a nilpotent Lie group, it gives the simplest extension of $\real^n$ where homogeneity is 
broken but a substantive part of the Euclidean translation group action is retained. 
Moreover the group possesses an $SL(2,\real)$ symmetry associated with its dilation 
structure which brings into play the non-unimodular group associated with the hyperbolic plane.
The complexity of the symmetry structure makes some questions arising from Euclidean 
analysis hard while other issues become relatively simple to sort out with multiple approaches.
This framework highlights the intrinsic tension between viewing a nilpotent semisimple 
group arising from complex geometry versus a manifold with non-positive curvature 
and a non-unimodular group action as the most characteristic extension of $\real^n$.

The Heisenberg group $\H_n$ is realized as the boundary of the Siegel upper half-space
in $\complex^{n+1}$, $D = \int z\in \complex^{n+1} : \Im z_{n+1} > |z_1|^2 + \cdots + 
|z_n|^2\}$. 
Then 
$$\H_n = \big\{ w= (z,t) : z\in \complex^n ,\, t\in \real\}$$
with the group action 
$$ww' = (z,t) (z',t')  = (z+z',\, t+t' +2\Im z\bar z')$$
and Haar measure on the group is given by 
$$dw = dz\,d\bar z\,dt = 4^n\,dx\,dy\,dt$$
where $z = x+iy \in \complex''$ and $t\in \real$. 
The natural metric is
$$d(w,w') = d\big( (z,t),(z',t')\big) = d(w^{'-1}w,\widehat 0\,)$$
with 
$$d(w,\widehat 0\,) = d\big( (z,t),(0,0)\big) = \big|\, |z|^2 + it\big|^{1/2} 
= \big|\, |z|^4 + t^2\big|^{1/4} = |w| \ .$$
To illustrate the relation with fractional smoothness, two examples expanding on Besov 
norms and Stein-Weiss fractional integrals are developed here.
But first as a technical tool to facilitate application relative to problems with mixed 
homogeneity, the ``triangle inequality'' lemma is rephrased as a convolution inequality.

\begin{lem}\label{lem3}
For $k\ge 0$ and $f,g,k$ satisfying suitable integrability conditions 
\begin{equation}\label{eq:lem3}
\begin{split}
&\int k(u,v,t-s) |f(u,t) - g(v,s)|^p\,du\,dv\,ds\,dt\\
\noalign{\vskip6pt}
&\qquad \ge\int \bigg[ \int k(u,v,t)\,dt\bigg] \, |F(u) - G(v)|^p\,du\,dv
\end{split}
\end{equation}
where 
$$F(u) = \bigg( \int |f(u,t)|^p \,dt\bigg)^{1/p}\quad ,\quad 
G(v) = \bigg( \int |g(v,t)|^p\,dt\bigg)^{1/p}\ .$$
\end{lem}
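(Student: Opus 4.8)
The plan is to reproduce the mechanism of the Reduction Lemma and the Triangle Inequality Lemma, now isolating the central variable $t$ in which the kernel is translation invariant. First I would freeze the transverse variables $u,v$ and concentrate on the inner double integral in $t$ and $s$. Because the kernel depends only on $t-s$, a translation-invariant change of variables $t=s+r$ (with $s$ held fixed) converts the inner integral into
\[
\int k(u,v,r)\bigg[\int |f(u,s+r) - g(v,s)|^p\,ds\bigg]\,dr,
\]
where for fixed $u,v,r$ the bracketed quantity is the $p$-th power of the $L^p(ds)$ distance between the functions $s\mapsto f(u,s+r)$ and $s\mapsto g(v,s)$.

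Next I would apply the reverse triangle inequality for the $L^p(ds)$ norm (a consequence of Minkowski's inequality), exactly as in the Triangle Inequality Lemma, to obtain
\[
\bigg(\int |f(u,s+r) - g(v,s)|^p\,ds\bigg)^{1/p} \ge \bigg|\bigg(\int |f(u,s+r)|^p\,ds\bigg)^{1/p} - \bigg(\int |g(v,s)|^p\,ds\bigg)^{1/p}\bigg|.
\]
The crucial observation is that translation invariance of the measure in the central variable gives $\int |f(u,s+r)|^p\,ds = F(u)^p$ independently of the shift $r$, and likewise $\int |g(v,s)|^p\,ds = G(v)^p$. Hence the lower bound $|F(u)-G(v)|^p$ is independent of $r$, so it factors out of the $r$-integration, leaving $\int k(u,v,r)\,dr$ as its coefficient. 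Integrating the resulting pointwise (in $u,v$) inequality over $u$ and $v$ — legitimate because $k\ge 0$ preserves the direction of the inequality — yields exactly \eqref{eq:lem3}.

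The only genuine work lies in the ``suitable integrability conditions'': I would need $k\ge 0$ together with enough decay on $f,g,k$ to justify Fubini's theorem when splitting off and recombining the iterated integrals, and to guarantee that $F(u)$ and $G(v)$ are finite for almost every $u,v$. Since each of the three steps — the change of variables, the reverse triangle inequality, and the factoring of $\int k\,dr$ — rests only on translation invariance of the measure in the $t$-direction, the estimate is really a statement about the one-parameter central subgroup. This is precisely the ``substantive part of the Euclidean translation group action'' retained on $\H_n$, so no feature of the full Heisenberg group law enters the proof of the inequality itself.
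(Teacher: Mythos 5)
Your argument is correct and is essentially identical to the paper's own proof: the paper likewise rewrites the integral via the translation $t\mapsto t+s$ so that the kernel factor becomes $k(u,v,t)$, and then applies the reverse triangle inequality in $L^p(ds)$ (the ``triangle inequality'' lemma) together with the translation invariance $\int|f(u,t+s)|^p\,ds=F(u)^p$ to obtain the pointwise bound $|F(u)-G(v)|^p$. Your additional remarks on Fubini and on the role of the central one-parameter subgroup are consistent with the paper's ``suitable integrability conditions'' and add nothing that conflicts with its proof.
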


\begin{proof}
Observe that 
\begin{equation*}
\begin{split}
&\int k(u,v,t-s) |f(u,t) - g(v,s)|^p\, du\,dv\,dt\,ds\\
\noalign{\vskip6pt}
&\qquad = \int k(u,v,t) \bigg[ \int |f(u,t+s) - g(v,s)|^p\,ds\bigg] \,du\, dv\,dt
\end{split}
\end{equation*}
and apply the ``triangle inequality'' for the $s$-integration; that is 
$$\int |f(u,t+s) - g(v,s)|^p\,ds \ge |F(u) - G(v)|^p\ .$$
\end{proof}

\begin{thm}\label{thm8}
Let $f\in \S (\H_n)$, $0<\beta <1$ and $1\le p< 2n/\beta$; then 
\begin{gather}
\int_{\H_n\times\H_n} 
\frac{|f(w) - f(w')|^p}{[d(w,w')]^{2n+2+p\beta}}\,dw\,dw' 
\ge F_{p,\beta} \int_{\H_n} |z|^{-p\beta} |f|^p\,dw\label{eq:thm8}\\
\noalign{\vskip6pt}
F_{p,\beta} = \frac{4^n \sqrt{\pi}\, \Gamma [\frac{2n+p\beta}4]}{\Gamma [\frac{2n+2+p\beta}4]}
\int_{\real^{2n}} \big| 1-|x|^{-\lambda}\big|^p\, |x-\eta|^{2n-p\beta}\,dx\notag
\end{gather}
for $\lambda = (2n-p\beta)/p$ and $\eta \in S^{2n-1}$.
\end{thm}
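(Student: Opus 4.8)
The plan is to transplant the Euclidean argument of Lemma~\ref{lem1} to $\H_n$, exploiting the fact that the homogeneous dimension of $\H_n$ is $2n+2$ while the horizontal variable $z$ still carries a genuine $\real^{2n}$ translation structure and the central variable $t$ supplies an extra integration. Writing $w=(z,t)$, $w'=(z',t')$ and using left-invariance $d(w,w')=d(w'^{-1}w,\widehat 0\,)$, the group law gives $w'^{-1}w=(z-z',\,t-t'-2\Im z'\bar z\,)$, so that
$$[d(w,w')]^{2n+2+p\beta}=\big(\,|z-z'|^4+(t-t'-2\Im z'\bar z)^2\,\big)^{(2n+2+p\beta)/4}.$$
The key observation is that this kernel depends on the central variables only through $t-t'$, after a shift $2\Im z'\bar z$ that is independent of $t,t'$; hence it has exactly the form $\hat k(z,z',t-t')$ required by Lemma~\ref{lem3}.

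First I would apply Lemma~\ref{lem3} with $u=z$, $v=z'$ to integrate out the central variable. Because the shift $2\Im z'\bar z$ is annihilated by translation invariance of the $t$-integration, the reduced kernel is the one-dimensional integral
$$\int_{\real}\big(|u-v|^4+\tau^2\big)^{-(2n+2+p\beta)/4}\,d\tau
=\frac{\sqrt{\pi}\,\Gamma[\tfrac{2n+p\beta}4]}{\Gamma[\tfrac{2n+2+p\beta}4]}\,|u-v|^{-(2n+p\beta)},$$
a Beta integral after the scaling $\tau\mapsto|u-v|^2\tau$. This produces precisely the Gamma factor appearing in $F_{p,\beta}$ and replaces the mixed-homogeneous Heisenberg kernel by a clean rotation-invariant Euclidean kernel of degree $-(2n+p\beta)$ on $\real^{2n}$, now acting on the partial $L^p$-norm $F(u)=\big(\int_\real|f(u,t)|^p\,dt\big)^{1/p}$.

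Second I would apply the Stein-Weiss Lemma (equivalently Lemma~\ref{lem1}) in dimension $2n$ with $\gamma=p\beta$, which is admissible since $0<p\beta<\min(2n,p)$ follows from $\beta<1$ and $p<2n/\beta$. This yields
$$\int_{\real^{2n}\times\real^{2n}}\frac{|F(u)-F(v)|^p}{|u-v|^{2n+p\beta}}\,du\,dv
\ge D_{p,\beta}\int_{\real^{2n}}|u|^{-p\beta}\,F(u)^p\,du,$$
with $D_{p,\beta}=\int_{\real^{2n}}\big|1-|x|^{-\lambda}\big|^p\,|x-\eta|^{-(2n+p\beta)}\,dx$ and $\lambda=(2n-p\beta)/p$. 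Unwinding $F$ restores $\int_{\real^{2n}\times\real}|z|^{-p\beta}|f|^p$, and collecting the Haar-measure factor $4^{2n}$ from $dw\,dw'$ against the $4^n$ from $dw$ on the right leaves a single factor $4^n$, reproducing exactly the stated constant $F_{p,\beta}$.

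The main obstacle is the non-abelian twist in the group law: the central argument of the metric is $t-t'-2\Im z'\bar z$ rather than $t-t'$, so Lemma~\ref{lem3} is not literally applicable to the naive time difference. The resolution—and the one point that needs care—is that this symplectic shift depends only on $(z,z')$ and not on $t,t'$, so it is a pure translation in the integration variable and disappears from $\int\hat k\,d\tau$. One should also verify that $F$ is an admissible input for the Stein-Weiss Lemma (continuity and decay suffice, or argue by density from $\S(\H_n)$), and note, exactly as in that lemma, that no finiteness of $D_{p,\beta}$ need be assumed for the inequality to hold.
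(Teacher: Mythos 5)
Your proposal is correct and follows essentially the same route as the paper: integrate out the central variable via Lemma~\ref{lem3} (the ``triangle inequality'' in $t,t'$), evaluate the resulting Beta integral to produce the Gamma-factor and the kernel $|z-z'|^{-(2n+p\beta)}$, and then apply Lemma~\ref{lem1} in dimension $2n$ to the partial norm $h(z)=(\int_\real|f(z,t)|^p\,dt)^{1/p}$. Your explicit handling of the symplectic shift $2\Im z'\bar z$ in the group law (a $t$-translation invisible to the $t$-integration) is a point the paper passes over silently, and your exponent $-(2n+p\beta)$ in the Stein--Weiss constant is the correct one.
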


\begin{proof}
Apply the ``triangle inequality'' to the $t,t'$ integrations
\begin{equation*}
\begin{split}
&\int_{\H_n\times\H_n} \frac{|f(w)-f(w')|^p}{[d(w,w')]^{2n+2+p\beta}}\, dw\,dw'\\
\noalign{\vskip6pt}
&\qquad \ge \int_{\complex^n\times\complex^n} 
\bigg[ \int_{\real} \big[ |z-z'|^4 + t^2\big]^{-(2n+2+p\beta)/4}\,dt \bigg] 
|h(z) - h(z')|^p\,dz\,d\bar z\, dz'\, d\bar z'\\
\noalign{\vskip6pt}
&\qquad  = \frac{\sqrt{\pi}\,\Gamma [\frac{2n+p\beta}4]}{\Gamma [\frac{2n+2+p\beta}4]}
\int_{\complex^n\times\complex^n}
\frac{|h(z)- h(z')|^p}{|z-z'|^{2n+p\beta}}\, dz\,d\bar z\, dz'\, d\bar z'\\
\noalign{\vskip6pt}
&\qquad \ge \frac{4^n\sqrt{\pi}\,\Gamma [\frac{2n+p\beta}4]}{\Gamma[\frac{2n+2+p\beta}4]}
\int_{\real^{2n}} \big| 1-|x|^{-\lambda}\big|^p |x-\eta |^{2n-p\beta}\,dx 
\int_{\complex^n} |z|^{-p\beta}\, |h|^p \,dz\,d\bar z
\end{split}
\end{equation*}
for $\lambda = (2n-p\beta)/p$ and $\eta \in S^{2n-1}$ and using Lemma~\ref{lem1} with 
$$h(z) = \bigg( \int_{\real} |f(z,t)|^p\, dt\bigg)^{1/p}$$
to obtain inequality~\eqref{eq:thm8} above.
\end{proof}

By combining methods taken from earlier papers (see Theorem~3 in \cite{Beckner-97}, 
Theorem~5 in \cite{Beckner-PAMS08}), one can obtain sharp estimates for 
Stein-Weiss fractional integrals on the Heisenberg group as maps from 
$L^p (\H_n)$ to $L^p (\H_n)$ for $1<p<\infty$ (see also \cite{HLZ} where different 
arguments are developed for the analysis of Stein-Weiss integrals). 

\begin{thm}\label{thm9} 
For $f\in L^p (\H_n)$, $1<p<\infty$, $w= (z,t) \in \H_n$, $0<\lambda < 2n+2$, 
$\alpha < 2n/p$, $\beta < 2n/p'$, $1/p + 1/p' =1$ and $2n+2 = \lambda +\alpha+\beta$
\begin{gather}
\Big\|\, |z|^{-\alpha} \Big( |w|^{-\lambda} * \big( |z|^{-\beta} f\big)\Big)\Big\|_{L^p(\H_n)}
\le D_{\alpha,\beta} \|f\|_{L^p(\H_n)}\label{eq:thm9}\\
\noalign{\vskip6pt}
D_{\alpha,\beta} = (4\pi^2)^n\, \sqrt{\pi}
\left[ \frac{\Gamma [\frac{2n-\alpha-\beta}4] \Gamma [\frac{\alpha+\beta}2] 
\Gamma [\frac{n}p - \frac{\alpha}2] \Gamma [\frac{n}{p'} - \frac{\beta}2] }
{\Gamma [\frac{2n+2-\alpha-\beta}4] \Gamma [\frac{2n-\alpha-\beta}2 ]
\Gamma [\frac{n}{p'} + \frac{\alpha}2] \Gamma [\frac{n}{p} + \frac{\beta}2]}  \right]  \notag
\end{gather}
\end{thm}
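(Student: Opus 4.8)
The plan is to reduce the Heisenberg estimate to the sharp Euclidean Stein--Weiss inequality on $\complex^n=\real^{2n}$ by integrating out the central variable, combining the central-integration technique of \cite{Beckner-97} with the sharp convolution estimate of \cite{Beckner-PAMS08}. Writing $w=(z,t)$, $v=(z',t')$ and using $wv^{-1}=(z-z',\,t-t'-2\Im z\bar z')$, the operator takes the form
\[
Tf(z,t)=|z|^{-\alpha}\int_{\complex^n}|z'|^{-\beta}\bigg(\int_{\real}\big[|z-z'|^4+(t-t'-2\Im z\bar z')^2\big]^{-\lambda/4}f(z',t')\,dt'\bigg)dz'\ .
\]
The decisive structural observation is that the central variable enters the kernel only through the translation $t\mapsto t-t'-2\Im z\bar z'$, so for fixed horizontal coordinates $z,z'$ the inner integral is a convolution in $t$ against the positive kernel $K_{z,z'}(\tau)=(|z-z'|^4+\tau^2)^{-\lambda/4}$.

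First I would fix $z$, apply Minkowski's integral inequality in $z'$ and Young's inequality in the central variable, using that a translation in $t$ is an $L^p(\real)$ isometry, to obtain
\[
\|Tf(z,\cdot)\|_{L^p(\real)}\le|z|^{-\alpha}\int_{\complex^n}|z'|^{-\beta}\,\|K_{z,z'}\|_{L^1(\real)}\,\widetilde F(z')\,dz'=:(S\widetilde F)(z)\ ,
\]
where $\widetilde F(z')=\|f(z',\cdot)\|_{L^p(\real)}$. The $L^1$-norm of the central kernel is evaluated by the elementary formula
\[
\int_{\real}\big(|z-z'|^4+\tau^2\big)^{-\lambda/4}\,d\tau=\sqrt\pi\,\frac{\Gamma(\frac{\lambda-2}4)}{\Gamma(\frac{\lambda}4)}\,|z-z'|^{2-\lambda}\ ,
\]
which produces the factor $\sqrt\pi\,\Gamma[\frac{2n-\alpha-\beta}4]/\Gamma[\frac{2n+2-\alpha-\beta}4]$ in $D_{\alpha,\beta}$ once $\lambda=2n+2-\alpha-\beta$ is substituted. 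This exhibits $S$ as the Euclidean Stein--Weiss operator on $\complex^n$ with kernel $|z|^{-\alpha}|z-z'|^{-(2n-\alpha-\beta)}|z'|^{-\beta}$, whose exponents satisfy the critical balance $(2n-\alpha-\beta)+\alpha+\beta=2n$.

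Next I would apply the sharp Stein--Weiss inequality of \cite{Beckner-PAMS08} on $\real^{2n}$ to bound $\|S\widetilde F\|_{L^p(\complex^n)}\le C\,\|\widetilde F\|_{L^p(\complex^n)}$; the conditions $\alpha<2n/p$ and $\beta<2n/p'$ are exactly what keep the Euclidean constant finite, and that constant supplies the remaining $\Gamma$-ratios in $D_{\alpha,\beta}$. Since $\|Tf\|_{L^p(\H_n)}^p=\int_{\complex^n}\|Tf(z,\cdot)\|_{L^p(\real)}^p\,dz\le\|S\widetilde F\|_{L^p(\complex^n)}^p$ and $\|\widetilde F\|_{L^p(\complex^n)}=\|f\|_{L^p(\H_n)}$ by Fubini, the central factor and the Euclidean constant multiply to give \eqref{eq:thm9}, with the Haar normalization $dw=4^n\,dx\,dy\,dt$ accounting for the prefactor $(4\pi^2)^n$.

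The routine part is tracking the $\pi$- and $4^n$-normalizations so that this product matches $D_{\alpha,\beta}$ exactly. The main obstacle is sharpness: one must verify that composing Minkowski, the central Young inequality and the Euclidean Stein--Weiss bound loses nothing. I would establish this with a near-extremizing family $f_R(z',t')=F_\delta(z')\,\eta_R(t')$, where $F_\delta$ nearly extremizes the Euclidean Stein--Weiss inequality and $\eta_R(t)=R^{-1/p}\eta(t/R)$ is slowly varying; as $R\to\infty$ the shift $2\Im z\bar z'$ becomes negligible and the central convolution becomes asymptotically tight uniformly in $z,z'$, so the ratio $\|Tf_R\|_{L^p}/\|f_R\|_{L^p}$ converges to the product constant, proving optimality of $D_{\alpha,\beta}$.
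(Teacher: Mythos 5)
Your proposal is correct and follows essentially the same route as the paper's own proof: integrate out the central variable via the sharp $L^1$ Young inequality (exploiting that the twist $2\Im z\bar z'$ acts only as a translation in $t$), evaluate $\int_{\real}(|z-z'|^4+\tau^2)^{-\lambda/4}d\tau$ to produce the factor $\sqrt{\pi}\,\Gamma[\tfrac{2n-\alpha-\beta}{4}]/\Gamma[\tfrac{2n+2-\alpha-\beta}{4}]$, and then apply the sharp Euclidean Stein--Weiss bound on $\real^{2n}$ to the radialized function $h(z)=\|f(z,\cdot)\|_{L^p(\real)}$. Your additional near-extremizer argument for sharpness is a welcome supplement that the paper only asserts implicitly.
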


\begin{proof} 
Apply the sharp $L^1$ Young's inequality for convolution in the $t$-variable which will 
then result in a reduction to the corresponding Stein-Weiss fractional integral on 
$\real^{2n}$ (see Theorem~2 in \cite{Beckner-PAMS08}). 
Here $|w|$ denotes $(|z|^4 + t^2)^{1/4}$ which defines the metric on $\H_n$, and $*$ 
denotes convolution on the group in equation \eqref{eq:thm9}.
$$\Big\|\, |z|^{-\alpha} \Big( |w|^{-\lambda} * \big( |z|^{-\beta}f\big)\Big)\Big\|_{L^p(\H_n)}
\le \Big\|\, |z|^{-\alpha}\Big( J * \big( |z|^{-\beta}h\big)\Big)\Big\|_{L^p(\complex^n)}$$
where 
\begin{gather*}
h(z) = \bigg[ \int_{\real} |f(z,t)|^p\,dt\bigg]^{1/p}\quad ,\quad 
\|h\|_{L^p(\complex^n)} = \|f\|_{L^p(\H_n)}\ ,\\
\noalign{\vskip6pt}
J(z) = \int_{\real} \big( |z|^4 + t^2\big)^{-\lambda/4}\,dt 
= |z|^{-\lambda +2} \int_{-\infty}^\infty (1+t^2)^{-\lambda/4}\,dt\\
\noalign{\vskip6pt}
= \frac{\sqrt{\pi}\, \Gamma (\frac{2n-\alpha-\beta}4)} {\Gamma (\frac{2n+2-\alpha-\beta}4)} \ 
|z|^{-\lambda+2}
\end{gather*}
then
\begin{equation*}
\begin{split}
&\Big\| \, | z|^{-\alpha} \Big( J * \big( |z|^{-\beta} h\big)\Big)\Big\|_{L^p (\complex^n)}\\
\noalign{\vskip6pt}
&\qquad 
= 4^{n(1+\frac1p)} \sqrt{\pi}\, 
\frac{\Gamma [\frac{2n-\alpha-\beta}4]} {\Gamma [\frac{2n+2-\alpha-\beta}4] }\ 
\Big\|\, |x|^{-\alpha} \Big( |x|^{-\alpha} * \big( |x|^{-\beta} h\big)\Big)\Big\|_{L^p(\real^{2n})}\\
\noalign{\vskip6pt}
&\qquad 
\le (4\pi^2)^n \sqrt{\pi}\, 
\left[ \frac{\Gamma [\frac{2n-\alpha-\beta}4] \Gamma [\frac{\alpha+\beta}2] 
\Gamma [\frac{n}p - \frac{\alpha}2] \Gamma [\frac{n}{p'} - \frac{\beta}2] }
{\Gamma [\frac{2n+2-\alpha-\beta}4] \Gamma [\frac{2n-\alpha-\beta}2 ]
\Gamma [\frac{n}{p'} + \frac{\alpha}2] \Gamma [\frac{n}{p} + \frac{\beta}2]}  \right]
\| h\|_{L^p(\complex^n)}\\
\noalign{\vskip6pt}
&\qquad 
= D_{\alpha,\beta} \|f\|_{L^p (\H_n)}
\end{split}
\end{equation*}
Notice that as demonstrated by the above calculation the hypothesis requires 
that $\lambda >2$ so that clearly the integral   above for $J$ is well-defined. 
\end{proof}

Analysis on the Heisenberg group reflects the characteristic property that the 
transitive group action ``breaks'' the closely coupled Euclidean translation and 
homogeneity in a way that does not allow intrinsic symmetries to clearly determine precise 
estimates. 
The complexity of the intertwining of lower-order invariance on the Heisenberg group, 
perhaps reflecting the influence of complex geometry, does not facilitate decoupling 
the contrasting symmetries. 
Partial discussion of this behavior is given in arguments contained in the author's 
paper \cite{Beckner-95Stein}. 
Proofs for the theorems above use integrability for the kernel to simply remove the 
non-Euclidean part of the convolution. 
But here a reductioin to functions radial in the $z$ variable will provide alternate 
proofs that utilize the underlying two-dimensional dilation symmetry on $\H_n$ that 
corresponds to the embedded action of $SL(2,\real)$ and hyperbolic space $\HH^2$.
Start with the metric on the Heisenberg group,
\begin{align*}
|w^{'-1} w| & = \Big[ |z-z'|^4 + |t-t' -2 \Im z\bar z{}'|^2 \Big]^{1/4}\\
& = \Big[ \big|\, |z|^2 + |z'|^2 - 2\Re z\bar z{}'\big|^2 
+ |t-t' - 2\Im z\bar z{}'|^2\Big]^{1/4}\ ;
\end{align*}
without too much overlap, change notation so that $y = |z|$ and $u_z \in SU(n)$ 
with $u_z (1,0) = z/|z|$, $\langle u\rangle = u_{1,1}$; then 
\begin{align*}
|w'{}^{-1} w| 
& = \Big[ \big| y+y' - 2\sqrt{yy'}\, \Re \langle U^{'-1} U\rangle\big|^2 
+ \big| t-t' - 2\sqrt{yy'}\, \Im \langle U^{'-1} U\rangle\big|^2 \Big]^{1/4}\\
& = (4yy')^{1/4} \left[ \Big| \frac{y+y'}{2\sqrt{yy'}} - \Re\zeta\Big|^2 
+ \Big| \frac{t-t'}{2\sqrt{yy'}} - \Im \zeta\Big|^2 \right]^{1/4}\ ,\qquad 
\zeta = \langle U^{'-1} U\rangle\\
& = (4yy')^{1/4} \Big[ \rho^2 -2\rho |\zeta | \cos (\theta -\varphi) + |\zeta |^2\Big]^{1/4}
\end{align*}
where 
$$\rho^2 = \left( \frac{y+y'}{2\sqrt{yy'}}\right)^2 
+ \left( \frac{t-t'}{2\sqrt{yy'}}\right)^2 = 1+\delta^2$$
with $\delta$ denoting the Poincar\'e metric on $\HH^2$
$$\delta = \frac1{2\sqrt{yy'}} \, \sqrt{(t-t')^2 + (y-y')^2} 
= \text{dist} \big[ (t,y), (t',y')\big]$$
and 
$$\tan \theta = \frac{t-t'}{y+y'} \ ,\quad \frac{-\pi}2 < \theta < \frac{\pi}2\ ;\quad 
\zeta = |\zeta |e^{i\varphi}\ ,\quad 0\le \varphi < 2\pi\ .$$
the connected rotations defined by the angles $\theta$ and $\varphi$, one depending on 
the dilation variables and the other on the variables coming from the complex boundary 
of the ball, create the difficulty for reduction to lower-dimensional submanifolds. 
But the problems here allow an integration to remove this interaction so that the kernel 
depends only on the Poincar\'e metric as a decreasing function of $\rho$ or $\delta$:
$$\psi_\lambda (\rho) = \int_{\partial B_n} |\rho - \zeta_1|^{-\lambda}\, d\zeta$$
where $\partial B_n = \{ \zeta \in \complex^n : |\zeta|^2 = \sum |\zeta_k|^2 =1\}$
and $d\zeta$ denotes normalized surface measure on the boundary. 
This representation allows not only development of an alternate framework for the 
proofs of Theorems~\ref{thm8} and \ref{thm9} but gives new insight on the role of 
embedding and convolution estimates on hyperbolic space $\HH^n$ and non-unimodular 
groups with non-positive curvature. 

\begin{proof}[Alternate proof (Theorem~\ref{thm9})] 
The weighted fractional integral on the Heisenberg group given by equation~\eqref{eq:thm9}
is equivalent to the inequality on $\HH^2$
\begin{gather}
\Big| \int_{\HH^2\times\HH^2} G(v) K(v^{'-1} v) F (v') \,d\nu \, d\nu'\Big|
\le C_{\alpha,\beta,p} \|F\|_{L^p (\HH^2)} \|G\|_{L^{p'} (\HH^2)} \label{eq:alternate}\\
\noalign{\vskip6pt}
\|F * K\|_{L^p(\HH^2)} \le C_{\alpha,\beta,p} \|F\|_{L^p(\HH^2)} \notag\\
\noalign{\vskip6pt}
C_{\alpha,\beta,p} = \|\Delta^{-1/p'} K\|_{L^1 (\HH^2)}\notag
\end{gather}
where
$$K(v) = y^\sigma \psi_\lambda \big[ \rho (v,\hat 0\,)\big]$$
with $\sigma = (n+1)( 1/p - 1/2) -\  \alpha/4\ +\  \beta/4$, 
$\rho^2 = 1+\delta^2 (v,\hat 0\,)$, $\lambda = 2n+2 -\alpha-\beta$, and $\Delta$ 
denotes the modular function on $\HH^2$.
Here Young's inequality for non-unimodular groups is applied. 
As for the Euclidean case, the $L^1$ estimate is sharp.
\end{proof}

\begin{proof}[Alternate proof (Theorem~\ref{thm8})] 
First apply the triangle inequality in the angular variables on $S^{2n-1}$ to reduce the 
problem to functions radial in $z$ and the integration falls over the variables 
$(|z|,t)$.
\begin{equation*}
\begin{split}
&
\int_{\H_n\times\H_n} 
\frac{|f(w) - f(w')|^p}{[d(w,w')]^{2n+2+p\beta}}\, dw\, dw'  \\
\noalign{\vskip6pt}
&\qquad \ge \int_{\H_n\times \H_n} \big| F(|z|,t) - F(|z'|,t') \big|^p 
\bigg[ \int \big[ d(w,w')\big]^{-(2n+2+p\beta)}\,d\hat\xi\bigg]\,dw\, dw'
\end{split}
\end{equation*}
where $dw = dz\,d\bar z\, dt = 4^n r^{2n-1}\, dr\, d\xi\,dt$ with $d\hat\xi$ denoting 
normalized surface measure on $S^{2n-1}$ and 
$$F(|z|,t) = \bigg[ \int_{S^{2n-1}} \big| f(w)\big|\, d\hat\xi\bigg]\ .$$
To repose this problem on the hyperbolic plane $\HH^2$, set $y= |z|^2$.
Then 
\begin{equation*}
\begin{split}
&\int_{\H_n\times\H_n} \big| F(|z|,t) - F(|z'| ,t')\big|^p
\bigg[ \int_{S^{2n-1}} \big[ d(w,w')\big]^{-(2n+2+p\beta)}\,d\hat\xi\bigg]\,dw\,dw'\\
\noalign{\vskip6pt}
&\qquad 
= (4\pi)^{2n}   2^{-(n+1+p\beta/2)}    
\Big/ \big[ \Gamma (n)\big]^2 
\int_{\HH^2\times\HH^2} \psi_\lambda (\rho) \Big| h(v) (y'/y)^{\sigma/2} 
- h(v') (y/y')^{\sigma/2}\Big|^p \,d\nu\, d\nu'
\end{split}
\end{equation*}
where $d\nu = y^{-2}\,dx\,dy$ is Haar measure on $\HH^2$, 
$\sigma = \frac{n+1}p - \frac{\beta}2$ and 
$$\psi_\lambda (\rho) = \int_{\partial B_n} |\rho -\zeta_1|^{-\lambda} \,d\zeta$$
where $\lambda = 2n +2+p\beta$ and $\rho = \sqrt{1+\delta^2}$, 
$\delta = d(v,v')$ is the Poincar\'e metric on $\HH^2$ with $v = (x,y)\in \HH^2$. 
$h(v) = F(\sqrt{y},x) y^{-(n+1-p\beta)/p}$ and the one-dimensional variable $t$ 
has be re-labeled as $x$.
Further setting
$$g(v) = y^{\sigma/2} \Big[ \psi_\lambda (\rho)\Big]^{1/p}$$
where here $\rho = \sqrt{1+\text{dist}(v,\hat 0)^2}$ with $\hat 0 = (0,1)$ being the 
origin in $\HH^2$.

Then
\begin{equation}\label{eq:pf-thm8}
\begin{split}
&\int_{\HH^2\times\HH^2} \psi_\lambda (\rho) \Big| h(v) (y'/y)^{\sigma/2} - h(v') 
(y/y')^{\sigma/2}\Big|^p\,d\nu\,d\nu'\\
\noalign{\vskip6pt}
&\qquad = \int_{\HH^2\times\HH^2} \Big| g(v^{-1} v') h(v) - g(v'{}^{-1} v) h(v')\Big|^p\, 
d\nu\, d\nu'\\
\noalign{\vskip6pt}
&\qquad\qquad 
\ge C_{p,\beta} \int_{\HH^2} |h|^p\,d\nu
\end{split}
\end{equation}
by applying the triangle inequality for non-unimodular groups (see equation~(4.3) 
on page~189 in \cite{Beckner-Forum}) where 
$$C_{p,\beta} =  \int_{\HH^2} \Big| \, |g(v)| - \Delta (v)^{-1/p} |g(v^{-1})|\, \Big|^p\, d\nu$$
with $\Delta$ denoting the modular function which is $1/y$ on $\HH^2$.
Checking all the changes in variables results in the constant obtained in 
equation~\eqref{eq:thm8} above.
\end{proof}

For clarity, the basic inequalities for a Lie group are listed in the following lemma:

\begin{ConvoLem}
Let $G$ be a locally compact group with left-invariant Haar measure denoted by $m$.
For $1\le p\le \infty$
\begin{gather*}
\| f* g\|_{L^p(G)} \le \|f\|_{L^p(G)} \|\Delta^{-1/p'} g\|_{L^1 (G)}\\
\noalign{\vskip6pt}
 \| f*g\|_{L^p(G)} \le \|f\|_{L^1(G)} \|  g\|_{L^p (G)}\\
\noalign{\vskip6pt}
\| f*g\|_{L^r(G)} \le \|f\|_{L^p(G)} \|\Delta^{-1/p'} g\|_{L^q (G)}
\end{gather*}
where $\Delta$ denotes the modular function defined by $m(Ey) = \Delta (y) m(E)$,
$1/p + 1/p' = 1$ and $1/r = 1/p + 1/q-1$. 
For $f,g,h \in L^p (G)$, $1\le p<\infty$
\begin{align*}
&\int_{G\times G}  |g(x^{-1} y) f(x) - h(y^{-1} x) f(y) |^p \, dm\, dm\\
\noalign{\vskip6pt}
&\qquad \ge \int_G \Big| \,  | g(y) | - \Delta (y)^{-1/p}  | h(y^{-1}) |\, \Big|^p\,dm 
\int_G \big| f(x)\big|^p\,dm
\end{align*}
The first, second and fourth inequalities are optimal.
\end{ConvoLem}

\section{Pitt's inequality}

The Hausdorff-Young estimates obtained above allow one to give some reasonable (though still
not optimal) constants for Pitt's inequality on the line of duality.
{From} the Appendix in \cite{Beckner-PAMS08}:

\begin{pitt}
For $f\in\S(\real^n)$, $1<p\le q< \infty$, $0<\alpha < n/q$, $0<\beta <n/p'$ and $n\ge 2$
\begin{equation}\label{eq:pitt}
\bigg[ \int_{\real^n} \Big|\, |x|^{-\alpha} \widehat f \,\Big|^q\,dx\bigg]^{1/q}
\le A \bigg[ \int_{\real^n} \Big|\, |x|^\beta f\Big|^p\,dx \bigg]^{1/p} 
\end{equation} 
with the index constraint
$$\frac{n}p + \frac{n}q + \beta-\alpha = n\ .$$
\end{pitt}

For $\alpha =\beta$, $p$ and $q$ are dual exponents. 
Changing notation in equation~\eqref{thm3-eq}, one now has the following 
form of Pitt's inequality:

\begin{thm}\label{thm-pitt}
For $f\in \S(\real^n)$ with $0<\beta <1$, $1<p\le 2$ and $\frac1p +\frac1{p'} =1$
\begin{gather*}
\bigg[ \int_{\real^n} \Big|\, |x|^{-\beta} \widehat f\, \Big|^{p'}\,dx\bigg]^{1/p'} 
\le A\bigg[ \int_{\real^n} \Big|\, |x|^\beta f\Big|^p \,dx\bigg]^{1/p}\\
\noalign{\vskip6pt}
A =
 \left[ p^{1/p} \,\big/\, p'{}^{1/p'}\right]^{n/2} 
\frac{\left[ \int_{\real^n} |e^{2\pi i w\cdot\eta} - 1|^{p'} \frac1{|w|^{n+p'\beta}}\, dw\right]^{1/p'}}
{\left[ \int_{\real^n} |1-|x|^{-\lambda} |^{p'} |x-\eta|^{-n-p'\beta}  \, dx \right]^{1/p'}} 
\end{gather*}
with $\lambda = (n-p'\beta)/p'$.
\end{thm}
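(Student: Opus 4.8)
The plan is to derive this form of Pitt's inequality by pairing the upper-bound half of Theorem~\ref{thm-HY} with Lemma~\ref{lem1}, both invoked at the dual exponent $p'$. The crucial observation is that since $1<p\le 2$ forces $p'\ge 2$, the relevant instance of the Hausdorff--Young estimate is the second line of Theorem~\ref{thm-HY} (the case $2\le p'<\infty$), applied not to $f$ but to $\widehat f$; this produces an \emph{upper} bound on a Besov seminorm of $\widehat f$, which is exactly what an upper bound of the type in Theorem~\ref{thm-pitt} requires. Throughout, both $f$ and $\widehat f$ lie in $\S(\real^n)$, so the hypotheses of Theorem~\ref{thm-HY} and Lemma~\ref{lem1} are available with $\beta$ unchanged and $p$ replaced by $p'$.

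First I would apply the $2\le p'<\infty$ inequality of Theorem~\ref{thm-HY} to $F=\widehat f$ at exponent $p'$, obtaining
\[
\int_{\real^n\times\real^n}\frac{|\widehat f(x)-\widehat f(y)|^{p'}}{|x-y|^{n+p'\beta}}\,dx\,dy
\le c_2\Big[\int_{\real^n}\big[\,|\xi|^\beta|\widehat{\widehat f}(\xi)|\,\big]^{p}\,d\xi\Big]^{p'/p}.
\]
By Fourier inversion $\widehat{\widehat f}(\xi)=f(-\xi)$, and since $|\xi|^\beta$ is even, the right-hand bracket equals $\int_{\real^n}|\,|x|^\beta f\,|^{p}\,dx$ after the substitution $\xi\mapsto-\xi$. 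Tracking the proof of Theorem~\ref{thm-HY} in the case $p'\ge2$ identifies $c_2$ as the Hausdorff--Young constant at exponent $p'$ raised to the power $p'$, times $\int_{\real^n}|e^{2\pi iw\cdot\eta}-1|^{p'}|w|^{-n-p'\beta}\,dw$; that Hausdorff--Young constant is the reciprocal of the value $\chy$ recorded at exponent $p$, namely $[\,p^{1/p}/(p')^{1/p'}\,]^{n/2}$.

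Next I would apply Lemma~\ref{lem1} at exponent $p'$ to the function $\widehat f$, which yields the lower bound
\[
\int_{\real^n\times\real^n}\frac{|\widehat f(x)-\widehat f(y)|^{p'}}{|x-y|^{n+p'\beta}}\,dx\,dy
\ge D_{p',\beta}\int_{\real^n}|x|^{-p'\beta}|\widehat f(x)|^{p'}\,dx,
\]
with $D_{p',\beta}=\int_{\real^n}|1-|x|^{-\lambda}|^{p'}|x-\eta|^{-n-p'\beta}\,dx$ and $\lambda=(n-p'\beta)/p'$. Chaining this beneath the display from the previous step, the common Besov seminorm cancels and one is left with $D_{p',\beta}\,\||x|^{-\beta}\widehat f\|_{p'}^{p'}\le c_2\,\||x|^\beta f\|_p^{p'}$. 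Taking $p'$-th roots gives the inequality of Theorem~\ref{thm-pitt} with $A=(c_2/D_{p',\beta})^{1/p'}$; substituting the value of $c_2$ reproduces the stated constant exactly, the factor $[\,p^{1/p}/(p')^{1/p'}\,]^{n/2}$ arising from the Hausdorff--Young constant at exponent $p'$, the numerator from $\int_{\real^n}|e^{2\pi iw\cdot\eta}-1|^{p'}|w|^{-n-p'\beta}\,dw$, and the denominator from $D_{p',\beta}$.

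The main obstacle is not the logical chain, which is short, but the bookkeeping that makes the constant exact and the estimate meaningful. I would need to verify convergence of both kernels: near $w=0$ one has $|e^{2\pi iw\cdot\eta}-1|\sim|w|$, so the numerator integrand behaves like $|w|^{-n+p'(1-\beta)}$, integrable precisely because $0<\beta<1$, while at infinity it decays like $|w|^{-n-p'\beta}$, integrable because $\beta>0$; and $D_{p',\beta}$ is finite and positive only when $\lambda=(n-p'\beta)/p'>0$, i.e.\ under the inherited constraint $\beta<n/p'$ from the general Pitt's inequality. The other delicate point is correctly tracking the Hausdorff--Young constant across the duality $p\leftrightarrow p'$, since the factor appearing in $A$ is the reciprocal of the $\chy$ recorded at the end of the proof of Theorem~\ref{thm-HY}.
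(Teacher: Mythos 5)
Your proposal is correct and coincides with the paper's intended argument: the paper justifies this theorem only by the remark that one ``changes notation'' in Theorem~\ref{thm-HY}, and the chain you describe --- the upper-bound half of Theorem~\ref{thm-HY} at exponent $p'$ applied to $\widehat f$ (using $\widehat{\widehat f}(\xi)=f(-\xi)$), combined with Lemma~\ref{lem1} at exponent $p'$ applied to $\widehat f$ --- is precisely the reading that reproduces the stated constant $A$, with the prefactor $[p^{1/p}/p'^{1/p'}]^{n/2}$ arising as the Hausdorff--Young constant at exponent $p'$, the numerator from the kernel integral at exponent $p'$, and the denominator from $D_{p',\beta}$. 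Your convergence checks and your observation of the implicit constraint $\beta<n/p'$ (needed for Lemma~\ref{lem1} at exponent $p'$, and consistent with the index restrictions in the general Pitt inequality) are accurate and in fact more careful than anything recorded in the paper.
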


Pitt's inequality is a natural expression of the uncertainty principle --- to measure the 
balance between the relative size of a function and its Fourier transform at infinity. 
This underlying structure was outlined in an earlier paper \cite{Beckner-95}. 
Not only does Pitt's inequality determine uncertainty, but it offers insight into the nature 
of optimal constants and provides an asymptotic upper bound for the constant given 
by the extended uncertainty inequality:
\begin{equation}\label{eq:extended}
\bigg[ \int_{\real^n} |f|^2 \,dx\bigg]^2 
\le B_\alpha \int_{\real^n} |x|^\alpha |f|^2\,dx 
\int_{\real^n} |\xi|^\alpha |\widehat f\,|^2\,d\xi
\end{equation}
with $B_\alpha \lesssim (4\pi/n)^\alpha$ for $\alpha >0$. 
Observe that if the non-optimal constant taken from Pitt's inequality is used for $B_\alpha$
$$\pi^\alpha \left[ \Gamma \Big(\frac{n-\alpha}4\Big)\,\Big/ \Gamma \Big(\frac{n+\alpha}4
\Big)\right]^2$$
then one obtains the sharp logarithmic uncertainty inequality in the limit $\alpha\to0$.

\section*{Appendix: Proof of the symmetrization lemma}

The ``two-point'' inequality lies at the heart of many results on rearrangement and 
symmetrization echoing a central spirit from the work of Hardy and Littlewood.
Here the simplest form is given by the numerical inequality for sets of distinct positive 
real numbers $\{a_1,a_2\}$, $\{b_1,b_2\}$ with 
$$a_1 b_2 + a_2 b_2 \le a^* b^* + a_* b_*$$
where $c^* = \max \{c_1,c_2\}$ and $c_* = \min \{c_1,c_2\}$. 
Extending to finite sequences  $\{c_k\}$ with $\{c_k^*\}$ given by sequential 
rearrangement in terms of decreasing size 
$$\sum_1^N a_k b_k \le \sum_1^N a_k^* b_k^*\ .$$
The Symmetrization Lemma used in the argument for Theorem~\ref{frac-powers1} above 
corresponds to a two-function rearrangement inequality and its proof depends on two 
simple ideas which are outwardly independent of the geometric character of 
Lebesgue measure:
\begin{itemize}
\item[(1)] symmetrization of functions or sets can be achieved as the limit of a sequence 
of lower-dimensional symmetrizations; 
\item[(2)] rearrangement inequalities involving only two functions can often be 
obtained from symmetrization on two points.
\end{itemize}
The argument depends on having a manifold with a reflection symmetry that divides the 
space into two equivalent half-spaces with the property that for two points $P,Q$ in the 
same half-space then 
$$\text{dist} (P,Q) \le \text{dist} (P,\tilde Q)$$
where $\tilde Q$ is the reflection of $Q$ into the other half-space. 
In addition, one needs a transitive group action that ``moves points on the manifold 
around''. 

\begin{two-pointLem}
Consider a $\sigma$-finite measure space $M$ invariant under an involution 
symmetry $\sigma$. 
Suppose $M$ has a mutually disjoint decomposition $M = M_+ \cup M - \cup P$ with 
$P = \{x\in M :\sigma (x)=x\}$ being a set of measure zero, $M_- = \sigma (M_+)$ and 
$d(x,y) \le d(x,\sigma (y))$ for $s,y\in M_+$. 
$k$ and $\rho$ are two nonnegative functions defined on $M\times M$ with 
\begin{itemize}
\item[(i)] $k\big[ \sigma (x),\sigma (y)\big] = k(x,y)$, $\rho \big[ \sigma (x),\sigma (y)\big]
= \rho(x,y)$
\item[(ii)] $k(x,y) \ge k\big[ x,\sigma (y)\big]$, $\rho (x,y) \le \rho \big[ x,\sigma (y)\big]$ 
for $x,y\in M_+$.
\end{itemize}
$\varphi$ is a nonnegative function defined on $[0,\infty)$ with the following properties:
$\varphi (0)=0$, $\varphi$ convex and monotone increasing, $\varphi''(0)\ge0$ 
and $f\varphi' (t)$ convex. 
Define $f^* (x) =\max \{f(x),f[\sigma (x)]\}$ and 
$f^* [\sigma (x)]= \min \{f(x),f[\sigma (x)]\}$ for $x\in M_+$ and functions defined 
everywhere on $M$.
Then 
\begin{equation}\label{eq:2point}
\int_{M\times M} \varphi \left[ \frac{|f^*(x) - g^*(y)|}{\rho (x,y)}\right] 
k(x,y)\,dx\,dy 
\le \int_{M\times M} \varphi \left[ \frac{|f(x)-g(y)|}{\rho (x,y)}\right] 
k(x,y)\, dx\,dy
\end{equation}
\end{two-pointLem}
\bigskip

\noindent
The key to the proof of this two-point inequality lies with two simple observations. 

\begin{lem}\label{lem-A2}
For $\varphi$ convex with $\varphi (0)=0$ and nonnegative numerical sequences 
$\{a_1,a_2\}$ and $\{b_1,b_2\}$
\begin{equation}\label{eq:lem-A2}
\varphi \big[ |a_1 -b_1|\big] + \varphi \big[ |a_2 -b_2|\big] 
\ge \varphi \big[ |a_1^* - b_1^*|\big] + \varphi \big[ |a_2^* - b_2^*|\big]
\end{equation}
\end{lem}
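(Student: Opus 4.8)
The plan is to strip off the absolute values by passing to the even extension of $\varphi$ and then reduce everything to a single four-point convexity inequality. First I would set $\Phi(t) := \varphi(|t|)$ for $t \in \real$ and check that $\Phi$ is convex on all of $\real$. On each of the two half-lines $\Phi$ agrees with $\varphi(t)$ or $\varphi(-t)$ and is therefore convex; at the origin the one-sided slopes satisfy $\Phi'_-(0) = -\varphi'_+(0) \le \varphi'_+(0) = \Phi'_+(0)$, so the only thing needed for global convexity is $\varphi'_+(0)\ge 0$, i.e.\ that $\varphi$ is nondecreasing near $0$. With this extension in hand, inequality~\eqref{eq:lem-A2} is exactly $\Phi(a_1-b_1)+\Phi(a_2-b_2)\ge \Phi(a_1^*-b_1^*)+\Phi(a_2^*-b_2^*)$, and the moduli have disappeared.

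Next I would sort the two pairs. Writing $a_1^*=\max(a_1,a_2)$, $a_2^*=\min(a_1,a_2)$ and likewise for $b$, set $x_1=a_2^*\le x_2=a_1^*$ and $y_1=b_2^*\le y_2=b_1^*$. Then the right-hand side is the \emph{sorted} pairing $\Phi(x_2-y_2)+\Phi(x_1-y_1)$, while the left-hand side, being one of the only two possible matchings of the pairs, equals either this same expression (giving equality) or the \emph{crossed} pairing $\Phi(x_2-y_1)+\Phi(x_1-y_2)$. Hence the lemma reduces to the single four-point inequality
\[
\Phi(x_2-y_2)+\Phi(x_1-y_1)\ \le\ \Phi(x_2-y_1)+\Phi(x_1-y_2),\qquad x_1\le x_2,\ y_1\le y_2.
\]

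To establish this I would introduce $P=x_1-y_2$, $Q=x_2-y_1$, $R=x_1-y_1$, $S=x_2-y_2$ and record two elementary facts that follow at once from $x_1\le x_2$ and $y_1\le y_2$: first $R+S=P+Q$ (both equal $x_1+x_2-y_1-y_2$), and second $P\le R\le Q$ together with $P\le S\le Q$, so that $R,S\in[P,Q]$. Since $R$ lies in $[P,Q]$ I can write $R=(1-\mu)P+\mu Q$ for some $\mu\in[0,1]$, and the relation $R+S=P+Q$ then forces $S=\mu P+(1-\mu)Q$. Applying convexity of $\Phi$ to each of these convex combinations and adding gives $\Phi(R)+\Phi(S)\le \Phi(P)+\Phi(Q)$, which is precisely the four-point inequality; this is the two-term majorization mechanism in its simplest form and needs nothing beyond convexity.

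I expect the only delicate point --- more a matter of bookkeeping than a genuine obstacle --- to be the convexity of the even extension $\Phi$, which quietly relies on $\varphi$ being nondecreasing. This is guaranteed by the standing hypotheses: with $\varphi\ge 0$ and $\varphi(0)=0$ the value $0$ is a minimum of the convex function $\varphi$, so $\varphi$ is nondecreasing on $[0,\infty)$. Without this monotonicity the even extension can acquire a downward corner at the origin and the inequality fails, so it is worth flagging explicitly. Everything else reduces to the single convex-combination step above, after which the numerical inequality feeds directly into the two-point symmetrization argument.
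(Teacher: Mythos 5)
Your proof is correct, and it is a more complete and careful version of what the paper does in one line. The paper's entire argument is the remark that ``slopes are increasing, i.e.\ $\varphi(s+t)-\varphi(t)$ is increasing in $t$''; your four-point inequality $\Phi(R)+\Phi(S)\le\Phi(P)+\Phi(Q)$ with $R+S=P+Q$ and $P\le R,S\le Q$ is exactly that increasing-increments property, repackaged as two-term majorization, so the underlying convexity mechanism is the same. What you add, and what the paper leaves implicit, is the honest treatment of the absolute values: passing to the even extension $\Phi(t)=\varphi(|t|)$ lets you work with the signed differences, for which the key identity $(x_1-y_1)+(x_2-y_2)=(x_1-y_2)+(x_2-y_1)$ holds exactly (it fails for the absolute values themselves, so a naive majorization of $|a_i-b_j|$ would not work). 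Your flag that convexity of $\Phi$ requires $\varphi$ to be nondecreasing is also a genuine point: as literally stated the lemma assumes only $\varphi$ convex with $\varphi(0)=0$, and a decreasing convex $\varphi$ gives counterexamples; the hypothesis is rescued either by the explicit ``monotone increasing'' assumption in the surrounding Two-Point Symmetrization Lemma or, as you observe, by $\varphi\ge 0$ together with $\varphi(0)=0$, which forces $\varphi$ to be nondecreasing on $[0,\infty)$ since $0$ is then a minimizer. In short: same convexity fact at the core, but your write-up supplies the case analysis, the handling of the moduli, and the monotonicity caveat that the paper's one-sentence proof suppresses.
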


\begin{proof} 
This result is determined by 
applying the property of convex functions that slopes are increasing; that is, 
$\varphi (s+t) - \varphi (t)$ is increasing in $t$.
\end{proof}

\begin{lem}\label{lem-A3} 
For $\varphi$ convex, $\varphi (0)=0$ and $t\varphi' (t)$ convex, then 
\begin{equation}\label{eq:lem-A3} 
T(\lambda) = \varphi \big[ \lambda |a_1 -b_1|\big] 
 + \varphi \big[ \lambda |a_2 - b_2|\big] 
 - \varphi \big[ \lambda |a_1^* - b_1^*|\big] 
 - \varphi \big[ \lambda |a_2^* - b_2^*|\big]
 \end{equation}
 is nondecreasing for $\lambda >0$.
 \end{lem}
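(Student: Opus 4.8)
The plan is to prove that $T$ is nondecreasing by showing $T'(\lambda)\ge 0$ for every $\lambda>0$. First I would record the one structural fact that carries the whole argument: under the stated hypotheses the function $\psi(t):=t\varphi'(t)$ is convex (this is exactly the assumption ``$t\varphi'(t)$ convex''), and $\psi(0)=0\cdot\varphi'(0)=0$ since $\varphi'(0)$ is finite for a convex $\varphi$ on $[0,\infty)$. Each summand of $T(\lambda)$ has the form $\varphi(\lambda c)$ with $c=|a_i-b_i|\ge 0$ (or the starred analogue), so differentiating term by term and using
\[
\frac{d}{d\lambda}\,\varphi(\lambda c)=c\,\varphi'(\lambda c)=\frac1\lambda\,\psi(\lambda c)
\]
gives
\[
T'(\lambda)=\frac1\lambda\Big[\psi(\lambda|a_1-b_1|)+\psi(\lambda|a_2-b_2|)-\psi(\lambda|a_1^*-b_1^*|)-\psi(\lambda|a_2^*-b_2^*|)\Big].
\]
Thus the sign of $T'(\lambda)$ is governed entirely by the bracketed quantity.

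The key observation is that this bracket is precisely the expression controlled by Lemma~\ref{lem-A2}, but applied to $\psi$ in place of $\varphi$ and to the numerical pairs scaled by the fixed positive factor $\lambda$. Scaling by $\lambda>0$ preserves the ordering of each pair, so that $(\lambda a)^*=\lambda a^*$ and $(\lambda a)_*=\lambda a_*$ (and likewise for the $b$'s); hence the starred terms on the right are genuinely the rearrangement of the scaled data, and $|\lambda a_i-\lambda b_i|=\lambda|a_i-b_i|$. Since $\psi$ is convex with $\psi(0)=0$, Lemma~\ref{lem-A2} applied to $\psi$ yields
\[
\psi(\lambda|a_1-b_1|)+\psi(\lambda|a_2-b_2|)\ge \psi(\lambda|a_1^*-b_1^*|)+\psi(\lambda|a_2^*-b_2^*|),
\]
so the bracket is nonnegative and $T'(\lambda)\ge 0$, giving the monotonicity.

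The conceptual heart of the proof — and the step I would flag as the main point to get right — is this reduction: differentiation in $\lambda$ converts the hypothesis on $t\varphi'(t)$ into \emph{exactly} the convex function to which Lemma~\ref{lem-A2} may be reapplied, so that the two-point rearrangement inequality feeds back into its own derivative. By contrast the only genuinely technical matters are routine: justifying term-by-term differentiation, which is immediate from the assumed smoothness of $\varphi$ (the hypothesis $\varphi''(t)\ge 0$ supplies differentiability), and noting that whenever some $|a_i-b_i|$ vanishes the corresponding summand is constant in $\lambda$, so the displayed formula for $T'$ and the application of Lemma~\ref{lem-A2} remain valid without change.
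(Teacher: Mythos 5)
Your proof is correct and is exactly the argument the paper leaves implicit in its one-line proof: differentiating termwise gives $T'(\lambda)=\lambda^{-1}\bigl[\psi(\lambda|a_1-b_1|)+\psi(\lambda|a_2-b_2|)-\psi(\lambda|a_1^*-b_1^*|)-\psi(\lambda|a_2^*-b_2^*|)\bigr]$ with $\psi(t)=t\varphi'(t)$, and Lemma~\ref{lem-A2} applied to $\psi$ (convex, $\psi(0)=0$) makes the bracket nonnegative. The one point worth flagging is that Lemma~\ref{lem-A2} tacitly requires its convex function to be nondecreasing (equivalently nonnegative, given the value $0$ at the origin), which for $\psi$ amounts to $\varphi'\ge 0$; this holds because $\varphi$ is nonnegative and monotone increasing in the surrounding Two-Point Lemma, the same implicit hypothesis under which the paper states Lemma~\ref{lem-A2} itself.
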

 
 \begin{proof} 
 Observe that $t\varphi' (t)$ convex implies that $T' (\lambda) \ge 0$ which determines 
 that not only does $\varphi (\lambda |a_1 -b_1|) + \varphi (\lambda |a_2 -b_2|)$ 
 not increase under rearrangement but that the variation does not decrease under scaling
 increase.
 \renewcommand{\qed}{}
 \end{proof}
 
 \begin{proof} 
 \begin{equation*}
 \begin{split} 
 &\int_{M\times M} \varphi \left[ \frac{|f(x) - g(y)|}{\rho (x,y)}\right] k(x,y)\,dx\,dy\\
 \noalign{\vskip6pt}
 &\qquad 
 = \int_{M_+\times M_+} \left[ \left\{ \varphi 
 \left[ \frac{f(x) - g(y)}{\rho (x,y)}\right]  
 + \varphi \left[ \frac{|f(\sigma (x)) - g(\sigma (y))|}{\rho (x,y)}\right] \right\} k(x,y) \right.\\
  \noalign{\vskip6pt}
 &\qquad \qquad 
\left.  +\ \left\{\varphi \left[ \frac{|f(\sigma (x)) - g(y)|}{\rho (x,\sigma(y))}\right] 
+ \varphi \left[ \frac{|f(x) - g(\sigma (y))}{\rho (x,\sigma(y))} \right] \right\} k(x,\sigma (y))
\right]\,dx\,dy\\
\noalign{\vskip6pt}
&\qquad 
= \int_{M_+\times M_+} \left\{ \varphi \left[ \frac{|f(x) - g(y)|}{\rho (x,y)}\right]
+ \varphi \left[ \frac{|f(\sigma (x)) - g(\sigma (y))|}{\rho (x,y)} \right] \right\} 
\big[ k(x,y) - (x,\sigma (y)\big]\,dx\,dy \\
\noalign{\vskip6pt}
&\qquad\qquad 
+ \int_{M_+\times M_+} \left\{ \varphi \left[ \frac{|f(x) - g(y)|}{\rho (x,y)}\right] 
+ \varphi \left[ \frac{|f(\sigma (x)) - g(\sigma(y))|}{\rho (x,y)} \right] 
- \varphi \left[ \frac{|f(x) - g(y)|}{\rho (x,\sigma (y))}\right] \right.\\
\noalign{\vskip6pt}
&\qquad\qquad \qquad
\left. -\ \varphi \left[ \frac{|f(\sigma (x)) - g(\sigma (y))|}{\rho (x,\sigma (y))}\right] \right\}\, 
k(x,\sigma (y))\,dx\,dy\\
\noalign{\vskip6pt}
&\qquad \qquad
+ \int_{M_+ \times M_+} \left[ \left\{ \varphi \left[ \frac{|f(x) - g(y)|}{\rho (x,\sigma (y))}\right]
+ \varphi \left[ \frac{|f(\sigma (x)) - g(\sigma (y))|}{\rho (x,\sigma (y))}\right] 
+ \varphi \left[ \frac{|f(\sigma (x)) - g(y)|}{\rho (x,\sigma (y))}\right] \right. \right.\\
\noalign{\vskip6pt}
&\qquad\qquad \qquad
\left. +\ \varphi \left[ \frac{|f(x) - g(\sigma (y))|}{\rho (x,\sigma (y))}\right] \right\} \, 
k(x,\sigma (y))\,dx\,dy
 \end{split}
 \end{equation*}
 Since $\varphi$ is convex, the first integral decreases when $f$ and $g$ are replaced 
 by $f^*$ and $g^*$ using Lemma~\ref{lem-A2}; 
 since $t\varphi' (t)$ is convex, the second integral decreases when $f$ and $g$ are 
 replaced by $f^*$ and $g^*$ using Lemma~\ref{lem-A3}. 
 Finally the third integral is invariant under symmetrization. 
 Hence the proof of the Two-Point Symmetrization Lemma is complete by using 
 simple point-wise estimates for the integrand.
 \end{proof}
 
 \begin{cor}[strict monotonicity]
 Assume that $k(x,y) > k[x,\sigma (y)]$ for $x,y\in M_+$, $\varphi$ is strictly convex, 
 and there exist sets of positive measure $A$ and $B$ in $M_+$ with $f(x) > f[\sigma (x)]$ 
 for $x\in A$ and $g(x) < g[\sigma (x)]$ for $x\in B$. 
 Then inequality~\eqref{eq:2point} becomes a strict inequality.
 \end{cor}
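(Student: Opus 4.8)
The plan is to show that strictness already arises from a single piece of the three-integral decomposition used to prove the Two-Point Symmetrization Lemma, namely the integral carrying the weight $k(x,y)-k[x,\sigma(y)]$. Recall that under $f,g\mapsto f^*,g^*$ the first of those integrals decreases by Lemma~\ref{lem-A2}, the second by Lemma~\ref{lem-A3}, and the third is invariant; since all three can only weakly decrease, the total strictly decreases as soon as one of them does, with no possibility of cancellation. I would extract this strict decrease from the first integral, where all three hypotheses of the corollary enter at once.

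First I would record the strict form of the two-point rearrangement inequality~\eqref{eq:lem-A2}. For strictly convex $\varphi$ with $\varphi(0)=0$, the inequality
$$\varphi\big[|a_1-b_1|\big]+\varphi\big[|a_2-b_2|\big]\ge\varphi\big[|a_1^*-b_1^*|\big]+\varphi\big[|a_2^*-b_2^*|\big]$$
is \emph{strict} precisely when the two pairs are oppositely ordered, i.e.\ when $a_1-a_2$ and $b_1-b_2$ are both nonzero and of opposite sign. Indeed this is the only configuration in which the sorted (similarly ordered) pairing on the right genuinely differs from the pairing on the left, and strict convexity upgrades the ``increasing slopes'' step in the proof of Lemma~\ref{lem-A2} to a strict increase. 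A one-line check with $\varphi(t)=t^2$, where the difference of the two possible pairings equals $-2(a_1-a_2)(b_1-b_2)$ and is positive exactly in the oppositely ordered case, confirms both the direction and the sharpness of this dichotomy.

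Next I would apply this pointwise at each $(x,y)\in M_+\times M_+$ with $a_1=f(x)$, $a_2=f[\sigma(x)]$, $b_1=g(y)$, $b_2=g[\sigma(y)]$; the common factor $1/\rho(x,y)$, legitimate by property~(i), leaves the orderings unchanged. On the product set $A\times B$ the hypotheses give $f(x)>f[\sigma(x)]$ and $g(y)<g[\sigma(y)]$, so the two pairs are oppositely ordered and the curly-bracket integrand of the first integral strictly decreases under symmetrization there. Since the corollary assumes $k(x,y)>k[x,\sigma(y)]$ strictly on $M_+\times M_+$, the weight is strictly positive, and $A\times B$ has positive product measure because $|A|,|B|>0$. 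Hence the first integral strictly decreases while the other two do not increase, so the symmetrized integral on the left of~\eqref{eq:2point} is strictly smaller than the original on the right.

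The main obstacle is the sharp equality-case analysis behind the strict version of Lemma~\ref{lem-A2}: one must verify that oppositely ordered pairs are \emph{exactly} the strict case, and that the degeneracies $a_1=a_2$ or $b_1=b_2$ yield no strict gain, so that it is genuinely the sets $A$ and $B$ — where the strict orderings hold — that drive the conclusion. A secondary, routine point is the measurability and positivity of the relevant integrand on $A\times B$, together with the standing assumption that the two sides of~\eqref{eq:2point} are finite (or the usual convention when they are not), ensuring that a pointwise strict decrease on a set of positive measure produces a strict decrease of the integral.
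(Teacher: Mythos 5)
Your proof is correct and takes exactly the route the paper intends: the corollary is stated without proof, and strictness is meant to be extracted from the first term of the three-integral decomposition in the proof of the Two-Point Symmetrization Lemma, precisely as you do (compare the remark later in the paper's proof of the Symmetrization Lemma that strict decrease is equivalent to the set $\{(x,y)\in M_+\times M_+ : [f(x)-f(\sigma(x))][g(y)-g(\sigma(y))]<0\}$ having positive measure, which is your $A\times B$). Your sharp-case analysis of Lemma~\ref{lem-A2} (strictness exactly for oppositely ordered pairs under strict convexity) and your caveat about finiteness of the two sides of~\eqref{eq:2point} are both accurate.
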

 
 To complete the proof of the Symmetrization Lemma for the classical manifolds
 $\real^n$, $S^n$ and $\HH^n$ (hyperbolic space), the two curved spaces can be 
 represented as sitting in $\real^{n+1}$ with a preferred point of reference, and a 
 Euclidean half-space is specified so that reflection across the half-space has the 
 required property for the metric --- namely, reflection symmetry increases distance 
 between points in the sense that 
 $$\text{dist}(x,y) \le \text{dist}(x,\sigma (y))$$
 for two points $x$ and $y$ in the specified half-space.
 
 \begin{SymmLem}
 Let $M$ be a geometric manifold that possesses:
 {\rm 1)}~a transitive group action under which the metric is invariant, and 
 {\rm 2)}~reflection symmetry as defined above.
 Equimeasurable radial decreasing rearrangement is defined in terms of geodesic distance.
 Let $\varphi,K,\rho$ be nonnegative functions defined on $[0,\infty)$ with the following 
 properties:
 {\rm (i)}~$\varphi (0) =0$, $\varphi$ convex and monotone increasing, $\varphi'' \ge 0$,
 and $t\varphi'(t)$ convex, 
 {\rm (ii)}~$K$ monotone decreasing, and 
 {\rm (iii)}~$\rho$ monotone increasing; and $d(x,y)$ is the distance between $x$ and $y$.
 Then for measurable functions $f$ and $g$
 \begin{equation}\label{eq:symlem1}
 \begin{split}
& \int_{M\times M}\varphi \left[ \frac{|f(x)-f(y)|}{\rho [d(x,y)]}\right] K\big[ d(x,y)\big]\,dx\,dy\\
\noalign{\vskip6pt}
&\qquad
 \ge \int_{M\times M} \varphi \left[ \frac{|f^*(x)-f^*(y)|}{\rho [d(x,y)]}\right] K\big[ d(x,y)\big]\,dx\,dy
 \end{split}
 \end{equation}
 \end{SymmLem}
 
 If $K$ is strictly decreasing and $\varphi$ is strictly convex, then strict inequality holds 
 unless $f(x) = \lambda f^* (\tau x)$ and $g(x) = \lambda g^* (\tau x)$ with $|\lambda|=1$ 
 and $\tau x$ a translate of $x$, or in the case of a finite measure one of the functions 
 $f,g$ is constant almost everywhere. 
 In the case $\rho$ is constant, the last condition in the hypothesis on $\varphi$ can be 
 dropped. 
 
 \begin{proof} 
 Fix an origin in $M$ and choose a ``hyperplane'' that does not pass through this point. 
 $M_+$ will denote the half-space containing the origin and $\sigma$ will be reflection 
 through this hyperplane.
 By possibly changing values on a set of measure zero, $f$ and $g$ will be defined 
 everywhere on $M$.
 Choose a sequence of two-point symmetrizations which when applied to $f,g$ give 
 sequences $f_n,g_n$ that converge almost everywhere to the radial decreasing 
 rearrangements $f^*,g^*$ on $M$.
 Each two-point symmetrization applied to a function gives a rearrangement of that 
 function.
 The ``two-point'' lemma shows that 
 $$\int_{M\times M} \varphi \left[ \frac{|f_n(x)-g_n(y)|}{\rho [d(x,y)]}\right] K\big[ d(x,y)\big]\,dx\,dy
 $$
 is a decreasing sequence of numbers as $n\to\infty$. 
 Applying Fatou's lemma to this sequence of non-negative $L^1 (M\times M)$ functions 
 gives inequality~\eqref{eq:symlem1}. 
 If $f$ and $g$ are not almost everywhere radial decreasing functions with  respect 
 to the same fixed point on $M$, then the {\em first\/} two-point symmetrization can be 
 chosen to give a strict decrease in the inequality.
 This fact is equivalent to the existence of a half-space $M_+$ such that the set 
 $$\left\{ (x,y)\in M_+\times M_+ :   \big[ f(x) - f\big(\sigma (x)\big)\big]
 \big[ g(y) - g\big(\sigma (y)\big)\big] < 0\right\}$$
 has positive measure.
 \end{proof}

\section*{Acknowledgements}

I would like to thank Nestor Guillen for drawing my interest to kinetic problems in Landau 
collision dynamics. 
In addition, I would like to thank Guozhen Lu for arranging my visit to Nanjing University 
where some parts of this paper were developed, and 
for drawing my attention to his recent work with Han and Zhu (\cite{HLZ}). 
Paul Garrett's notes on ``Classical homogeneous spaces'' provided useful background.


\end{document}